\documentclass[a4paper, oneside,11pt]{amsart}
\usepackage[utf8]{inputenc}
\usepackage[english]{babel}

\usepackage[big]{layaureo}

\usepackage{amssymb, latexsym}

\usepackage[colorlinks=true, linkcolor=red, urlcolor=black, citecolor=blue]{hyperref}

\theoremstyle{definition}
\newtheorem{definition}{Definition}[section]
\newtheorem{example}[definition]{Example}
\newtheorem{remark}[definition]{Remark}

\theoremstyle{theorem}
\newtheorem{theorem}[definition]{Theorem}
\newtheorem{lemma}[definition]{Lemma}
\newtheorem{proposition}[definition]{Proposition}
\newtheorem{corollary}[definition]{Corollary}

\DeclareMathOperator{\id}{\ensuremath{id}}
\DeclareMathOperator{\tr}{\ensuremath{tr}}

\DeclareMathOperator{\End}{\ensuremath{End}}
\DeclareMathOperator{\ad}{\ensuremath{ad}}
\DeclareMathOperator{\Ad}{\ensuremath{Ad}}
\DeclareMathOperator{\Rm}{\ensuremath{R}}
\DeclareMathOperator{\Ric}{\ensuremath{Ric}}
\DeclareMathOperator{\SU}{\ensuremath{SU}}
\DeclareMathOperator{\U}{\ensuremath{U}}
\DeclareMathOperator{\diag}{\ensuremath{diag}}
\DeclareMathOperator{\Ch}{\ensuremath{C}}
\DeclareMathOperator{\Bis}{\ensuremath{B}}

\DeclareMathOperator{\Tor}{\ensuremath{T}}
\DeclareMathOperator{\frakg}{\ensuremath{\mathfrak{g}}} 
\DeclareMathOperator{\frakh}{\ensuremath{\mathfrak{h}}} 
\DeclareMathOperator{\frakk}{\ensuremath{\mathfrak{k}}} 
\DeclareMathOperator{\frakl}{\ensuremath{\mathfrak{l}}} 
\DeclareMathOperator{\frakm}{\ensuremath{\mathfrak{m}}} 
\DeclareMathOperator{\frakn}{\ensuremath{\mathfrak{n}}} 
\DeclareMathOperator{\frakb}{\ensuremath{\mathfrak{b}}} 
\DeclareMathOperator{\frakd}{\ensuremath{\mathfrak{d}}} 
\DeclareMathOperator{\frakf}{\ensuremath{\mathfrak{f}}} 
\DeclareMathOperator{\fraks}{\ensuremath{\mathfrak{s}}} 
\DeclareMathOperator{\frakt}{\ensuremath{\mathfrak{t}}} 
\DeclareMathOperator{\fraku}{\ensuremath{\mathfrak{u}}} 
\DeclareMathOperator{\frakv}{\ensuremath{\mathfrak{v}}} 
\DeclareMathOperator{\frakz}{\ensuremath{\mathfrak{z}}} 
\DeclareMathOperator{\frsu}{\ensuremath{\mathfrak{su}}} 
\DeclareMathOperator{\cplx}{\ensuremath{c}}

\newcommand{\CC}{\ensuremath{\mathbb{C}}} 
\newcommand{\RR}{\ensuremath{\mathbb{R}}} 
\newcommand{\HH}{\ensuremath{\mathbb{H}}} 
\newcommand{\calB}{\ensuremath{\mathcal{B}}} 
\newcommand{\vareps}{\ensuremath{\varepsilon}} 

\providecommand{\abs}[1]{\lvert#1\rvert}
\providecommand{\dd}[1]{\ensuremath{\operatorname{d}\!{#1}}}

\begin{document}

\title{On homogeneous HKT manifolds and the Einstein condition}

\author{Lucio Bedulli and Lorenzo Marcocci}
\thanks{This work was partially supported by GNSAGA of INdAM and by the Project PRIN 2022 CUP: 20225J97H5 ``Differential-geometric aspects of manifolds via Global Analysis''}
\address{Dipartimento di Ingegneria e Scienze dell'Informazione e Matematica, Università dell'Aquila, Via Vetoio, 67100 L'Aquila, Italy}
\email{lucio.bedulli@univaq.it}
\email{lorenzo.marcocci@graduate.univaq.it}
\subjclass[2020]{53C26, 53C25, 53C30}

\date{}

\maketitle

\begin{abstract}
We consider homogeneous hypercomplex manifolds with a transitive action of a compact Lie group and we give a characterization of invariant HKT metrics on them.
On every such hypercomplex manifold we prove the existence of an invariant HKT-Einstein metric, which is unique up to scaling.
Furthermore,  we determine for which invariant HKT metrics the torsion and the curvature of the Bismut connection are Bismut-parallel, showing that invariant strong HKT metrics have this property.
\end{abstract}

\section{Introduction}\label{S:Intro}
The geometry of complex homogeneous manifolds is a traditional and active research field, which has its roots in the works by Samelson~\cite{Sam} and Wang~\cite{Wang} about the existence of complex structures on compact semisimple Lie groups and their coset spaces. 
There is a vast literature dealing with Kähler metrics in this setting, see e.g.~\cite{AP, PavingTheWay}, and more recently also Hermitian non-Kähler ones, including~\cite{Barbaro, FinoGrantcharov, LauretMontedoro, Podesta, PodestaZheng}.
In a similar vein, the aim of the present paper is to investigate hyperkähler metrics with torsion, in short \emph{HKT metrics}, on manifolds acted transitively by compact Lie groups, focusing on ``canonical'' ones.

HKT structures have been firstly introduced in theoretical physics by Howe and Papadopoulos (\cite{HowePapadopoulos}), together with the subclass of \emph{strong} HKT structures, having a significant role in certain supersymmetric sigma models.
Shortly after, starting with the paper by Grantcharov and Poon~\cite{GrantcharovPoon}, HKT structures attracted a great deal of attention for their geometric properties.
In some ways, they can be viewed as the analogue of Kähler metrics in hypercomplex geometry, 
in this direction see for example~\cite{BS, GLV}, and at the same time they generalize the notion of hyperkähler metrics.

In analogy with Kähler metrics, an important part of the research is the quest of canonical HKT metrics. Recently, an Einstein-type condition for HKT metrics has been introduced by Fusi and Gentili in~\cite{FusiGentili}.
The definition is  inspired by that of Kähler-Einstein metrics or, more generally, first Chern-Einstein metrics (see section~\ref{SS:HKT} for details).
Such \emph{HKT-Einstein} metrics are also stationary solutions of the geometric flow on hypercomplex manifolds considered in~\cite{BGV}.
Several properties of HKT-Einstein metrics are investigated in~\cite{FusiGentili}, as part of a broad study of special hyperhermitian metrics.\\

One of the remarkable features of HKT metrics is that they exist in abundance on compact homogeneous manifolds.
Indeed, among the first examples of (strong) HKT metrics, there are bi-invariant hyperhermitian metrics constructed in~\cite{GrantcharovPoon} on certain compact Lie groups with invariant hypercomplex structures (see also~\cite{OpfermannPapadopoulos}), and on the same group manifolds left-invariant  HKT-Einstein metrics have been found in~\cite{FusiGentili}.
Such Lie groups belong to the larger class of so-called \emph{Joyce hypercomplex manifolds} (\cite{Joyce}). 
They are homogeneous spaces of the form $G/L$, where $G$ is a compact Lie group and $L$ is a closed subgroup satisfying appropriate conditions, equipped with invariant hypercomplex structures.
The construction by Joyce is based on the choice of a maximal torus in $G$ and a maximal set of strongly orthogonal roots in the corresponding root system, which induce a decomposition of  the Lie algebra of $G$ of the form 
\begin{equation}\label{E:decomp}
	\frakg = \frakl \oplus \frakm, \quad\quad
	\frakm = \bigoplus_{j=1}^m \frakm_j,
\end{equation}
where $\frakm$ is an $\Ad(L)$-module and $\frakm_j$ are specific submodules.
This allows one to define $\Ad(L)$-invariant hypercomplex structures on each ``layer'' $\frakm_j$ independently, and in turn $G$-invariant hypercomplex structures on $G/L$, provided that $L$ is connected.
In fact, such structures form an $m^2$-parameter family. 
It has been proven in~\cite{DimitrovTsanovI, DimitrovTsanovII} that every invariant hypercomplex structure on a coset space $G/L$ as above can be obtained via the Joyce construction.
This had been shown previously in~\cite{BedulliGoriPodesta} assuming in addition the existence of a hyperhermitian naturally reductive metric, which is in particular HKT.\\

In this paper we study in more depth invariant HKT metrics on general Joyce hypercomplex manifolds.
In particular, we are able to characterize them in the following way.
\begin{theorem}\label{T:HKT}
Let $(M=G/L, I, J)$ be a Joyce hypercomplex manifold, with a fixed decomposition of $\frakg$ as in~\eqref{E:decomp}.
Then there is a hyperhermitian metric $h$ on $\frakm$, for which the layers $\frakm_j$ are orthogonal, such that every $G$-invariant HKT metric $g$ on $M$ corresponds to a scalar product of the form
\begin{equation}\label{E:scalarproduct}
	g = \sum_{j=1}^m g_j h|_{\frakm_j \times \frakm_j},
\end{equation}
for some positive real numbers $g_1, \dots, g_m$.
Conversely, if $L$ is connected, a scalar product of this form gives rise to an invariant HKT metric on $M$.
\end{theorem}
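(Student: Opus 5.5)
Two facts drive the argument. First, every $G$-invariant hyperhermitian metric is HKT exactly when the fundamental $2$-forms satisfy $d\omega_J = -d^{\Omega}$-type compatibility, i.e.\ $d^c_I\omega_I = d^c_J\omega_J = d^c_K\omega_K$; equivalently, $\partial_I(\omega_J+i\omega_K)=0$. Second, the Joyce decomposition fixes the algebraic structure. Recall that in the Joyce construction each layer is $\frakm_j = \frakb_j \oplus \frakf_j$, where $\frakb_j\cong \frsu(2)$ is spanned by the root vectors of a strongly orthogonal root $\alpha_j$ together with $H_{\alpha_j}$, and $\frakf_j$ is a sum of $\Ad(L)$-invariant pieces on which $\frakb_j$ acts. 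The hypercomplex structure on $\frakm_j$ is determined by choosing $I,J$ on $\frakb_j$ via the quaternionic structure of $\frsu(2)$, up to an element of a torus direction, and extending by $\ad(\frakb_j)$ on $\frakf_j$. I will take $h$ to be the restriction of $-B$ (the Killing form) on each $\frakb_j\oplus\frakf_j$, rescaled on $\frakb_j$ if needed so as to be hyperhermitian. This uses the freedom in the Joyce parameters: the torus components that enter $I|_{\frakb_j}$ must be normalised so that $h$ is $I,J,K$-invariant. The layers are then $h$-orthogonal.

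For the direct implication, I take an invariant HKT metric $g$, i.e.\ an $\Ad(L)$-invariant hyperhermitian scalar product on $\frakm$. I first use $\Ad(L)$-invariance together with invariance under $I,J,K$ to show that $g$ is block diagonal with respect to the finest decomposition into $\Ad(T\cap L)$-weight spaces. The key point is that the $I,J,K$-orbit of a root space in $\frakf_j$ links the weights $\beta$ and $\beta\pm\alpha_j$, and these lie in no other layer. This reduces $g$ to finitely many unknowns: one constant on each $\frakb_j$ (hyperhermitian forces it to be a multiple of $h$), and one constant on each irreducible $\ad(\frakb_j)$-orbit in $\frakf_j$, plus possible cross terms between isomorphic modules, which I expect to be killed by hyperhermitianity. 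I then impose the HKT condition by computing $\partial_I\Omega$ with $\Omega=\omega_J+i\omega_K$, using the invariant formula $d\omega(X,Y,Z) = -\omega([X,Y]_\frakm,Z) + \text{cyclic}$. Evaluating this on triples $(X,Y,Z)$ with $X\in\frakb_j$ and $Y,Z\in\frakf_j$ should force the constant on each piece of $\frakf_j$ to equal the constant $g_j$ on $\frakb_j$. Evaluating on triples spread across different layers should impose no condition tying $g_j$ to $g_k$, because of strong orthogonality of the roots $\alpha_j,\alpha_k$.

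For the converse, with $L$ connected, $\Ad(L)$-invariance follows from $\frakl$-invariance. A metric of the form \eqref{E:scalarproduct} is $\frakl$-invariant and hyperhermitian because $h$ is and the layers are $\frakl$-submodules preserved by $I,J,K$. The HKT condition is the same computation read backwards: all brackets between different layers either vanish or land in $\frakl$, or they appear in $\partial_I\Omega$ with coefficients that cancel blockwise.

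I expect the main obstacle to be the bookkeeping in the HKT computation. The brackets $[\frakf_j,\frakf_k]$ for $j<k$ may have components in $\frakm_l$ for other $l$, and I must check that the resulting terms in $\partial_I\Omega$ cancel for arbitrary independent $g_j$. My approach is to work in a complexified root-vector basis adapted to $I$, where $\partial_I$ only involves $(1,0)$-brackets. I would then use the fact that $\Omega$ is of type $(2,0)$ for $I$ and that $\omega_J+i\omega_K$ on $\frakm_j$ is $g_j$ times a fixed $\Ad(L)$-invariant $(2,0)$-form $\sigma_j$. The claim then reduces to $\partial_I\sigma_j=0$ for each $j$ separately. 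I would verify this for each $j$ using the Jacobi identity and the fact that $\sigma_j$ pairs only weights $\beta$ and $\alpha_j-\beta$.
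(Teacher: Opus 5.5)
There is a genuine gap in the direct implication. You reduce $g$ to block-diagonal form using $\Ad(T\cap L)$-weights plus hyperhermitianity, and that reduction fails.

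The statement must cover $L=\{e\}$ (e.g.\ $G=\SU(3)$, $m=1$), where invariance gives nothing. There, left-invariant hyperhermitian metrics on $\frsu(3)\cong\HH^2$ correspond to positive quaternionic-Hermitian $2\times 2$ matrices, a $6$-dimensional family, while your reduction leaves at most two constants. The problem persists for non-trivial connected $L$. Since $[\frakl,\frakt]=0$ and $\frakl\subset\frakb_m$ centralizes every $\frakd_j$ with $j\le m$, $\ad(\frakl)$ acts trivially on $\bigoplus_{j=1}^m(\RR X_1^j\oplus\frakd_j)\cong\HH^m$. Hyperhermitian inner products on this space have arbitrary quaternionic off-diagonal entries, so the cross terms between layers are \emph{not} killed by hyperhermitianity. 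Likewise $\ad(\frakd_j)$ is not a symmetry of $g$. So nothing forces $g$ to respect $\ad(\frakd_j)$-irreducible pieces of $\frakf_j$, or to make distinct root spaces orthogonal. The weights that actually separate the root spaces are those of $\frakt$, and since $\frakt\not\subset\frakl$ they cannot enter through invariance.

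The missing idea is that the orthogonality relations must themselves be extracted from $\partial\Omega=0$. The paper evaluates \eqref{E:dOmega} on triples whose first entry is $H_j=H_{\alpha_j}-iIH_{\alpha_j}\in\frakt^{1,0}$, which acts as a weight operator. If $\beta+\sigma\notin R$, then $d\Omega(H_j,E_\beta,E_\sigma)=(\beta+\sigma)(H_j)\,g(E_\beta,JE_\sigma)$, and $\operatorname{Re}(\beta+\sigma)(H_j)=(\beta+\sigma,\alpha_j)$ because roots are imaginary on $X_1^j$. Choosing $\sigma=\alpha_j$, $\alpha_j-\gamma$, $\alpha_l$, $\alpha_l-\gamma$, together with the triple $(H_j,H_l,E_{\alpha_j})$, yields:
\begin{itemize}
\item $g(E_\beta,\bar H_j)=0$;
\item $g(E_\beta,E_{-\gamma})=0$ for $\beta\ne\gamma$;
\item $g(\frakm_j,\frakm_l)=0$ for $j\ne l$.
\end{itemize}
Only after this does the triple $(H_j,E_\beta,E_{\alpha_j-\beta})$, with $\abs{k_j}^2=1/(2\abs{\alpha_j}^2)$, equalize the norms as in your plan. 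This is the content of Lemmas~\ref{L:orthogonality} and~\ref{L:levels}, and it uses neither invariance nor connectedness of $L$.

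Your converse is fine in outline. Writing $\Omega=\sum_j g_j\sigma_j$ and checking $\partial\sigma_j=0$ layer by layer is a valid reorganization of the paper's verification. However, $\partial\sigma_j$ must also be tested on triples involving a root of a later layer, e.g.\ $\alpha+\beta+\gamma=\alpha_j$ with $\alpha,\gamma\in R_j^+$, $\beta\in R_k^+$, $k>j$. There the cancellation comes from the Jacobi identity for the $N_{\alpha,\beta}$, not from strong orthogonality.

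Finally, correct your description of the layers: $\frakm_j=\RR X_1^j\oplus\frakd_j\oplus\frakf_j$ with $X_1^j\in\fraku$. The metric $h$ equals $-\calB$ on $\frakd_j\oplus\frakf_j$, while $X_1^j$ is declared orthogonal to it with $h(X_1^j,X_1^j)=4/\abs{\alpha_j}^2$. In general this is not a rescaled restriction of the Killing form.
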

The hyperhermitian metric $h$ is obtained by suitably modifying the opposite of the Killing form.
It should be noted that we are not putting restrictions on the hypercomplex structures on each coset space, and the HKT metrics we find, in general, are not naturally reductive.\\


Using also this characterization, we can prove existence and uniqueness of invariant HKT-Einstein metrics on each Joyce hypercomplex manifold.
\begin{theorem}\label{T:HKT-E}
Let $(G/L, I, J)$ be a Joyce hypercomplex manifold. Then there exists a unique (up to scaling) $G$-invariant HKT-Einstein metric on $M$.
\end{theorem}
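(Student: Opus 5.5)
By Theorem~\ref{T:HKT}, the problem reduces to a finite-dimensional one. Every $G$-invariant HKT metric corresponds to a vector $(g_1,\dots,g_m)\in\RR_{>0}^m$ via~\eqref{E:scalarproduct}. So I will compute the HKT-Einstein condition explicitly in terms of these parameters. Following~\cite{FusiGentili}, an HKT metric is HKT-Einstein when the Ricci-type form $\rho$ of the Obata connection (equivalently, the $(2,0)$-part with respect to $I$ of the Ricci form of the Bismut connection, or the quaternionic analogue of the first Chern form) is proportional to the fundamental $(2,0)$-form $\Omega = \omega_J + i\omega_K$. The Obata connection depends only on $(I,J)$, not on $g$. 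Hence the obvious candidate for $\rho$ is the $(2,0)$-part of $d\theta$-type expressions, built from the Lee form and the canonical bundle. My first step is therefore to write the HKT-Einstein equation as $\rho_B^{2,0} = \lambda\,\Omega_g$. Here $\rho_B^{2,0}$ should be computed on the Lie algebra level, layer by layer, using the decomposition~\eqref{E:decomp} and the explicit description of $I,J$ on each $\frakm_j$ from the Joyce construction.

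The key computation is to show that, evaluated on $\frakm_j$, the left-hand side is $c_j$ times $h$-type forms, where the constants $c_j$ are either independent of $g$ or depend on the $g_k$ in a simple way. For $G$-invariant data one expects $\rho^{2,0}$ to be given by the $(2,0)$-part of $-\tfrac12 d\theta$ or of the Chern--Ricci form. On a homogeneous space these are determined by the $\Ad(L)$-invariant volume and the trace of $\ad$ restricted to $\frakm$, composed with $I$. In the Joyce setting each layer $\frakm_j$ is built from an $\frsu(2)$ (associated with a strongly orthogonal root $\alpha_j$) together with a module. So I expect $\rho^{2,0}|_{\frakm_j}$ to be a fixed multiple $c_j(\alpha_j)\, h|_{\frakm_j}$-form, with $c_j>0$ independent of the metric, because the first Chern-type form of the Obata connection is metric independent. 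Meanwhile $\Omega_g|_{\frakm_j} = g_j\,\Omega_h|_{\frakm_j}$. The Einstein equation then becomes $c_j = \lambda g_j$ for all $j$, whose unique solution up to scaling is $g_j = c_j/\lambda$. For the converse, Theorem~\ref{T:HKT} guarantees that this choice indeed gives an HKT metric; if $L$ is disconnected, one descends from the identity component or notes that the metric is $\Ad(L)$-invariant anyway.

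The main obstacle is the middle step. I need to verify that the Ricci-type form really has no cross terms between distinct layers and has the right quaternionic type on each layer. I also need to confirm that the coefficients $c_j$ are strictly positive, since otherwise the solution would not be a metric. I would first try to express $\rho$ through the Lee form $\theta$ of the HKT structure, computed via $\theta = -I d^*\omega_I$-type formulas on $\frakm$. Invariance, together with orthogonality of the layers, forces $\theta$ to lie in the $\Ad(L)$-fixed part, which in Joyce's construction is spanned by specific elements of the $\frsu(2)$-factors in each layer. Then $d\theta$ is computed from the Lie bracket, which gives the needed per-layer proportionality. Positivity of $c_j$ should follow from positivity of the Killing-form data (norms of the roots $\alpha_j$). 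If $\rho$ turns out to depend on $g$ through $\theta$, I would instead obtain a system $c_j(g)=\lambda g_j$ that is homogeneous of degree zero in $g$. I would then solve it directly by triangularity along the layers, proceeding inductively from $\frakm_1$.
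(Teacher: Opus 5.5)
Your outer skeleton matches the paper's: reduce via Theorem~\ref{T:HKT} to $(g_1,\dots,g_m)$, note that the curvature side does not depend on the invariant metric, and solve $c_j=\lambda g_j$. However, the condition you set up is the wrong one, and this breaks the argument. Every candidate you propose for $\rho^{2,0}$ vanishes identically on an HKT manifold:
\begin{itemize}
\item the Bismut--Ricci form of an HKT metric is zero, since HKT implies CYT;
\item the Chern--Ricci form is of type $(1,1)$, so its $(2,0)$-part is zero;
\item the $(2,0)$-part of $d\theta$ is $\partial\theta^{1,0}$, which also vanishes: applying $\partial$ to $\partial\bar\Omega^n=\theta^{1,0}\wedge\bar\Omega^n$ gives $\partial\theta^{1,0}\wedge\bar\Omega^n=0$, and wedging with $\bar\Omega^n$ is injective on $(2,0)$-forms.
\end{itemize}
Hence $\rho^{2,0}=\lambda\Omega$ holds with $\lambda=0$ for \emph{every} invariant HKT metric and singles out nothing. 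The definition actually used is $\frac12(\Ric_{\omega_I}-J\Ric_{\omega_I})=\lambda\omega_I$, where $\Ric_{\omega_I}$ is the Chern--Ricci form of $(I,g)$. Equivalently, $\partial_J\theta^{1,0}=\lambda\Omega$ with the twisted operator $\partial_J=J^{-1}\bar\partial J$; the twist by $J$ is exactly what makes the condition non-trivial. Likewise, the metric-independence you need is not that of the Obata connection. It is the elementary fact that $G$-invariant volume forms on $G/L$ are proportional, so $\Ric_{\omega_I}$ is the same for all invariant Hermitian metrics. That gives uniqueness up to scaling once layer-wise proportionality is known, and nothing more.

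That proportionality, with positive constants, is the real content of the theorem, and you only conjecture it; it is not automatic. The paper computes $\Ric_{\omega_I}(E_\gamma,\bar E_\gamma)=2i\gamma(H_{\hat\delta})$ for $\gamma\in\hat R^+$, with all other components zero, including those on $\frakt^{1,0}\times\frakt^{0,1}$. This step uses Lemmas~\ref{L:orthogonality} and~\ref{L:levels}. Since $\omega_I$ does not vanish on $\frakt$, and the values on $R_j^+$ vary (for $\gamma\neq\alpha_j$ one has $\gamma(H_{\hat\delta})+(\alpha_j-\gamma)(H_{\hat\delta})=\alpha_j(H_{\hat\delta})$), $\Ric_{\omega_I}$ itself is not a multiple of $\omega_I$ on $\frakm_j$. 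Only its $J$-anti-invariant part is, and showing this takes three steps:
\begin{itemize}
\item By \eqref{E:mainEquation}, $JE_\gamma=2k_jN_{\gamma,-\alpha_j}E_{\gamma-\alpha_j}$ with $4\abs{k_j}^2N_{\gamma,-\alpha_j}^2=1$. Averaging then gives $\frac12(\Ric_{\omega_I}-J\Ric_{\omega_I})(E_\gamma,\bar E_\gamma)=i\alpha_j(H_{\hat\delta})$ for all $\gamma\in R_j^+$, against $\omega_I(E_\gamma,\bar E_\gamma)=ig_j$.
\item On $\frakt$, the relation $J\bar H_j=-E_{\alpha_j}/k_j$ gives the same ratio.
\item Mixed terms vanish, because $J$ sends each $E_\gamma$ either into $\frakt^{0,1}$ or to a multiple of a single root vector.
\end{itemize}
So the condition becomes $g_j=\lambda^{-1}\alpha_j(H_{\hat\delta})$, and existence needs $\alpha_j(H_{\hat\delta})>0$. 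This does not follow from root norms alone. The contributions of $R_k^+$ for $k<j$ cancel in pairs $\gamma\leftrightarrow\alpha_k-\gamma$, while each $\gamma\in R_j^+\setminus\{\alpha_j\}$ has $(\alpha_j,\gamma)=\frac12(\alpha_j,\alpha_j)$. Together these give $\alpha_j(H_{\hat\delta})=\frac{(\alpha_j,\alpha_j)}{4}(2+\dim_{\CC}\frakf_j)$. Your Lee-form fallback cannot replace this computation. $\Ad(L)$-invariance puts no constraint on $\theta$ when $L$ is trivial, and $d\theta$ has no $(2,0)$-part, so it cannot be proportional to $\Omega$.
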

This should be regarded as the analogue of the well known result of existence and uniqueness up to scaling of invariant Kähler-Einstein metrics on homogeneous simply connected compact Kähler manifolds.\\

The HKT-Einstein condition is based on the curvature of the Chern connection.
On the other hand, on a HKT manifold there is also a natural hyperhermitian connection.
Indeed, one of the equivalent definitions of a HKT structure is that of a hyperhermitian structure $(I,J,g)$ whose Bismut connections all coincide.
Therefore is natural to investigate torsion and curvature of the unique Bismut connection.

The Bismut connection of homogeneous Hermitian manifolds associated to compact Lie groups has been investigated in several recent works.
In particular, in~\cite{FinoGrantcharov, LauretMontedoro}  it is proved, among other things, that a left-invariant Hermitian metric on a compact semisimple Lie groups is simultaneously CYT and SKT
if and only if it is bi-invariant. Related results concerning C-spaces have been found in~\cite{Barbaro}.

On the other hand, Hermitian metrics with parallel Bismut torsion, also called Bismut-torsion-parallel (BTP), have attracted attention in their own right.
In~\cite{PodestaZheng}, a characterization of BTP Hermitian metrics is given for certain classes of flag manifolds and for compact simple Lie groups. It is shown moreover that also the Bismut curvature is parallel, obtaining  examples of Bismut-Ambrose-Singer (BAS) manifolds.

In the case of Joyce hypercomplex manifolds, we obtain the following.

\begin{theorem}\label{T:Bismut}
Let $(M=G/L, I, J)$ be a Joyce hypercomplex manifold, with an invariant HKT metric  $g$.
\begin{enumerate}
\item
The metric $g$  is BTP if and only if the metric $\tilde{g}$ on the base of the Tits fibration $G/L\to F$ relative to $I$
is induced by the restriction of a $\Ad(G)$-invariant scalar product on $\frakg$, and in this case it is BAS.
\item
If $g$ is strong HKT, then it is BTP. 
\item
Assume $g$ is strong HKT and moreover $M=\Tor^{\ell} \times (K_1/L_1) \times \cdots \times (K_t/L_t)$, where $\Tor^{\ell}=\U(1)^{\ell}$ is a $\ell$-dimensional torus, $K_j$ are compact, connected, simple Lie groups, and $L_j\subset K_j$ are closed, connected subgroups. Then $M$ is a Lie group and the HKT structure  $(I,J,g)$ is left-invariant.
\end{enumerate}
\end{theorem}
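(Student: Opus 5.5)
The plan is to work entirely at the Lie algebra level, using the characterization of Theorem~\ref{T:HKT}. We write $g=\sum_j g_j h|_{\frakm_j\times\frakm_j}$ and compute the Bismut connection explicitly. The unique Bismut connection $\nabla^B$ of the HKT structure is determined by $g(\nabla^B_XY,Z)=g(\nabla^{LC}_XY,Z)+\tfrac12 H(X,Y,Z)$, where $H=-Jd\omega_I$, so it is $G$-invariant. We represent it through its Nomizu map $\Lambda\colon\frakm\to\mathfrak{so}(\frakm)$. The fact we expect to drive everything is the following. For a Joyce structure, the Tits fibration $G/L\to F=G/K$ relative to $I$ has torus fibres $K/L$, where $\frakk=\frakl\oplus\fraka$ with $\fraka$ abelian (spanned by the $H_{\alpha_j}$ together with a toral complement). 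The $(1,0)$-part of the complement of $\fraka$ in $\frakm$ is a sum of root spaces for a fixed choice of positive roots.

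Using the HKT condition $H(X,Y,Z)=d\omega_I(IX,IY,IZ)$, we will show that, on the horizontal part $\frakn=\frakm\ominus\fraka$, $H$ is a combination of terms $g([X,Y]_\frakm,Z)$ weighted by the coefficients $g_j$. The metric $\tilde g$ on $F$ is the restriction of $g$ to $\frakn$, and it is $\Ad(K)$-invariant. The torsion $T^B=H$ is then an invariant $3$-form. Parallelism $\nabla^BH=0$ translates into $\Lambda(X)\cdot H=0$ for all $X\in\frakm$. We split $X$ into its vertical part in $\fraka$ and its horizontal part in $\frakn$. The vertical part acts by derivations preserving the root decomposition, so its contribution vanishes automatically. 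The horizontal contribution reduces to a cyclic identity among the weights $\tilde g$ assigns to the root spaces $\frakg_\alpha,\frakg_\beta,\frakg_{\alpha+\beta}$. This identity holds for all triples exactly when the weights are constant on each simple ideal of $\frakg$ meeting $\frakn$, i.e.\ when $\tilde g$ is the restriction of an $\Ad(G)$-invariant product. This is the main obstacle: carrying out the computation of $\Lambda$ on root vectors carefully, keeping track of the scalars $g_j$ across different layers. We would first test it on $\SU(3)$ to fix signs. For BAS, once $H$ is parallel, we use the standard fact that for an invariant connection with parallel torsion, $\nabla^B\Rm^B=0$ reduces to $\Lambda$ being, in a suitable sense, a Lie algebra homomorphism modulo $\frakl$. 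This follows from the same normal form of $\Lambda$, which in the BTP case coincides on $\frakn$ with the canonical connection of a naturally reductive-type decomposition.

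For (2), strong means $dH=0$. We compute $dH$ on triples and quadruples of root vectors using the formula for $H$ above. The component of $dH$ of the shape $H([X,Y],Z,W)$ summed cyclically produces the same cyclic defect in the weights as in (1), so $dH=0$ forces the BTP condition of part (1). For (3), strong HKT on a product forces $\tilde g$ to be a multiple of the Killing form on each simple factor, and we would exhibit a Lie group structure as follows. On each $K_j/L_j$, Joyce's construction with a strong metric forces $L_j$ to have a complementary subgroup, obtained by using the fibre torus directions. Adding a torus factor of complementary rank, the product of this with $\Tor^\ell$ yields a group $\tilde G$ acting simply transitively while preserving $(I,J,g)$; this is the group arising in the classification of Joyce Lie groups such as $\SU(2n+1)$ and $\U(1)\times\SU(2n)$. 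If time permits, the argument checks factor by factor against the list of Joyce pairs.
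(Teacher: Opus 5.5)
\textbf{Part (1).} Your set-up (the Nomizu map $\Lambda^{\Bis}$ on root vectors, and $\nabla^{\Bis}T^{\Bis}=0\iff\Lambda^{\Bis}(X)\cdot T^{\Bis}=0$) is the one the paper uses. But the step you call ``the main obstacle'' is the entire content of the theorem, and taken at face value it is false.

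The components $\nabla^{\Bis}_{E_{-\alpha}}T^{\Bis}(E_\alpha,E_\beta)$ and $\nabla^{\Bis}_{E_{-\beta}}T^{\Bis}(E_\alpha,E_\beta)$ only give the dichotomy of Lemma~\ref{L:Bismut}, e.g.\ $g_{\alpha+\beta}=g_\alpha+g_\beta$ or $g_{\alpha+\beta}=g_\alpha=g_\beta$. The additive branch is the Kähler condition, and invariant Kähler metrics on flag manifolds are BTP with non-constant weights. In the Joyce case that branch is excluded by two inputs you never mention:
\begin{enumerate}
\item[(a)] the weights are constant on each layer $R_j$ (Theorem~\ref{T:HKT});
\item[(b)] by inspection of the Joyce decompositions of the simple types, for each later layer $\alpha_{j_r}$ in a component with highest root $\alpha_{j_1}$ there is $\alpha\in R^+_{j_1}$ with $\alpha+\alpha_{j_r}\in R$ and $\alpha-\alpha_{j_r}\notin R$.
\end{enumerate}
By Lemma~\ref{L:rootsum}, $\alpha+\alpha_{j_r}\in R^+_{j_1}$. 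The additive branch would then read $g_{j_1}=g_{j_1}+g_{j_r}$, which is impossible, so $g_{j_1}=g_{j_r}$. Your $\SU(3)$ test has a single layer and cannot see this cross-layer coupling.

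Also, the vertical part does not vanish automatically. For $H\in\frakt^{\CC}$, $(\Lambda^{\Bis}(H)\cdot T^{\Bis})(E_\alpha,E_\beta)$ equals $T^{\Bis}(E_\alpha,E_\beta)$ times $g(H,(H_{\alpha+\beta})_{\frakm})/g_{\alpha+\beta}-g(H,(H_\alpha)_{\frakm})/g_\alpha-g(H,(H_\beta)_{\frakm})/g_\beta$. This is non-zero, e.g., for $\beta=\alpha_l$, $\alpha\in R^+_j$ with $\alpha+\alpha_l\in R$, $l>j$, $H=H_{\alpha_l}$ and $g_j\neq g_l$. So it must be checked in the ``if'' direction, where it vanishes only because the weights agree.

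\textbf{Part (2).} The same gap affects (2). Saying that $dH$ ``produces the same cyclic defect'' is not an argument: SKT and BTP are different identities, and quadruples inside one layer never relate $g_j$ to $g_k$. The paper needs an explicit quadruple in $R_j$, with $\alpha_j$ the highest root of the component: $\alpha+\beta+\gamma+\nu=0$, $\alpha+\beta=\alpha_j$, $\alpha+\nu=\alpha_k$, $\alpha+\gamma\notin R$. The Lauret--Montedoro formula for $dd^{c}\omega_I$ together with $N_{\alpha,\beta}N_{\gamma,\nu}+N_{\alpha,\nu}N_{\beta,\gamma}=0$ then gives $2N_{\alpha,\beta}N_{\gamma,\nu}(g_k-g_j)=0$, and (1) applies.

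\textbf{Part (3).} Here your plan points the wrong way. What must be shown is that $L$ is trivial once factors with $K_j=L_j$ are discarded, and this has to come from strongness excluding non-trivial $L_j$. A ``complementary subgroup from the fibre torus directions'' is neither justified nor tied to the strong condition.

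For example, take $G=\U(1)\times\SU(3)$ and $L=\{\diag(z,z,z^{-2})\}$. This is a Joyce hypercomplex manifold satisfying the hypotheses of (3): $d=m=1$, $\frakb_1=\fraku(1)\oplus\RR\,\diag(i,i,-2i)$, $\frakl=\RR\,\diag(i,i,-2i)$. Yet $\pi_2(M)\cong\pi_2(\SU(3)/L)\cong\mathbb{Z}$, so $M$ is not even homotopy equivalent to a Lie group. Part (3) holds there only because no invariant strong HKT metric exists, and no group construction can show that.

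The missing steps are these.
\begin{enumerate}
\item[(a)] Take the quadruple above with $\alpha_k\in R_{\frakl}$, so the $\alpha+\nu$ term drops. This gives $0=2N_{\alpha,\beta}N_{\gamma,\nu}g_j$, so any simple component meeting $R_{\frakl}$ lies entirely in $R_{\frakl}$, and that factor is a point.
\item[(b)] Then $R_{\frakl}=\varnothing$, so $\frakl_j\subset\fraks_j$. The condition $dd^{c}\omega_I(E_\alpha,E_{-\alpha},E_\beta,E_{-\beta})=0$ with constant weight $\lambda$ on $\Delta_j$ gives $g((H_\alpha)_{\frakm},(H_\beta)_{\frakm})=\lambda\calB(H_\alpha,H_\beta)$. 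Hence $g(Z_{\frakm},Z_{\frakm})=\lambda\calB(Z,Z)$ on $\fraks_j^{\CC}$. For $Z\in\frakl_j$ the left side vanishes, so $\frakl_j=0$, and $M=G$ with a left-invariant structure.
\end{enumerate}

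\textbf{BAS.} Your BAS step rests on an unspecified ``standard fact''. What makes it work is that in the BTP case $\Lambda^{\Bis}(E_\alpha)=0$, and $\Lambda^{\Bis}(H)$ is diagonal on root spaces with eigenvalues linear in the root. After that, $\nabla^{\Bis}R^{\Bis}=0$ is a direct check.
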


Combining (3) with~\cite[Corollary 5.12]{DimitrovTsanovII}, we obtain that the only Joyce hypercomplex manifolds, which are simply connected and admit an invariant strong HKT metric, are  products $\SU(2n_1+1) \times \cdots \times \SU(2n_r+1)$ with a bi-invariant hyperhermitian metric.
Part (3) of Theorem~\ref{T:Bismut} should be compared with very similar results in~\cite[Theorems 3.5, 3.6]{BFGV}.\\

The paper is organized as follows.
In Section~\ref{S:Prelim} we collect preliminar material that will be used throughout the paper.
Section~\ref{S:HKT} is devoted to the characterization of invariant HKT metrics, that is Theorem~\ref{T:HKT}.
Section~\ref{S:HKT-Einstein} deals with HKT-Einstein metrics  and with Theorem~\ref{T:HKT-E}. Section~\ref{S:Bismut} contains some observations on the Bismut connection and the proof of Theorem~\ref{T:Bismut}. 

\subsection*{	Acknowledgements}
The authors would like to thank Elia Fusi, Giovanni Gentili and Fabio Podestà for useful discussions and remarks.

\section{Preliminaries}\label{S:Prelim}

\subsection{HKT structures}\label{SS:HKT}
A \emph{hypercomplex manifold} is a $4n$-dimensional smooth manifold $M$ equipped with two anti-commuting complex structures $I$ and $J$, which produce a 2-sphere of complex structures $\mathsf{H} := \{ aI + bJ + cK : a^2 + b^2 + c^2 = 1 \}$, where $K = IJ$.
We say that a Riemannian metric $g$ on $M$ is \emph{hyperhermitian} if it is Hermitian with respect to both $I$ and $J$, hence with respect to every complex structure $P$ in the 2-sphere.
We have the usual associated 2-forms $\omega_P := g(P \cdot, \cdot)$, and it is easy to see that $g$ is hyperhermitian if and only if $g$ is $I$-Hermitian and $J\omega_{I} = - \omega_{I}$.
In this case $g$ also defines a differential form of type $(2,0)$ with respect to $I$
\[
	\Omega := \frac{1}{2}\left( \omega_J + i \omega_K \right).
\]
One can also recover $g$ from $\Omega$ by the relation $g = 2 \operatorname{Re}( \Omega( \cdot, J\cdot) )$.

A hyperhermitian metric $g$ is called \emph{hyperkähler with torsion} (HKT, for short) if the Bismut connections of the Hermitian structures $(P, g)$ coincide for all $P\in\mathsf{H}$; this unique connection will be denoted by $\nabla^{\Bis}$.
A key property, proved in~\cite{GrantcharovPoon}, is that a hyperhermitian metric $g$ is HKT if and only if
\[
	\partial\Omega=0,
\]
where the operator $\partial$ is taken with respect to $I$.
Since $I,J,K$ and $g$ are parallel tensors with respect to $\nabla^{\Bis}$, the restricted holonomy group of $\nabla^{\Bis}$ is contained in $\operatorname{Sp}(n)$.
It then follows that each Hermitian structure $(P,g)$ of a HKT manifold is \emph{Calabi-Yau with torsion} (CYT), that is,
$\operatorname{Hol^{\circ}}(\nabla^{\Bis})\subset\SU(2n)$. 
This property is equivalent to the vanishing of the associated Bismut-Ricci form.

A HKT structure is called $\emph{strong}$ if one (and hence every) Hermitian structure $(P,g)$ is \emph{strong Kähler with torsion} (SKT).
By definition this means that the torsion 3-form of the Bismut connection, namely
$- \dd{}^{\cplx}_{I}\omega_I$, where $\dd{}^{\cplx} = I^{-1} \dd{} I$, is $\dd{}$-closed.

Consider now the Chern connection, denoted by $\nabla^{\Ch}$, of the Hermitian structure $(I, g)$, and let $R^{\Ch}(X,Y) = [\nabla^{\Ch}_{X}, \nabla^{\Ch}_{Y}] - \nabla^{\Ch}_{[X,Y]}$ be the curvature tensor.
The (first) \emph{Chern-Ricci form} $\Ric_{\omega_I}$ is the $(1,1)$-form with respect to $I$ defined by
\[
	\Ric_{\omega_I}(X,Y) 
	= \sum_{k=1}^{2n} 
		g\left( R^{\Ch}(X,Y) I e_k, e_k \right)
	= i \sum_{k=1}^{2n}
		g\left( R^{\Ch}(X,Y) v_k, \bar{v}_k \right),
\]
where $\{ e_k, Ie_k \}_{k=1}^{2n}$ is a local orthonormal frame, and $v_k = \frac{1}{\sqrt{2}}(e_k - iIe_k)$.
\begin{definition}[\cite{FusiGentili}]
A HKT metric $g$ is called \emph{HKT-Einstein} if there exists a function $\lambda\in \operatorname{C^{\infty}}(M; \RR)$ such that
\begin{equation}\label{E:HKT-E}
	\frac{\Ric_{\omega_I} - J\Ric_{\omega_I}}{2} = \lambda \omega_I.
\end{equation}
\end{definition}
Note that the left-hand side of equation~\eqref{E:HKT-E} is the $J$-anti-invariant part of $\Ric_{\omega_I}$.
It is proved in~\cite{FusiGentili} that the condition does not depend on the pair of anti-commuting complex structures chosen in $\mathsf{H}$, and that it can be rephrased as
\[
	\partial_{J}(\theta^{1,0}) = \lambda \Omega,
\]
where $\theta = - I\!\dd{}^*\omega_I$ is the Lee form of $(I, g)$, $\theta^{1,0}$ is its $(1,0)$-part with respect to $I$, and $\partial_{J} = J^{-1}\bar{\partial}J$.
Moreover, when $M$ is compact, $\lambda$ must be a non-negative constant, with value
\[
	\lambda = \frac{1}{2n\operatorname{Vol}(M,g)} 
				\int_M \abs{\theta}^2 \frac{\Omega^n\wedge\bar{\Omega}^n}{(n!)^2}.
\]
Since $\Ric_{\omega_I}$ is invariant by scaling of the metric, when searching for HKT-Einstein metrics it is not restrictive to assume $\lambda \in \{ 0, 1 \}$.

By the above formula, $\lambda=0$  if and only if $\theta=0$, i.e., $(I, g)$ is balanced.
Since for every HKT metric one has $\partial\bar{\Omega}^n = \theta^{1,0}\wedge\bar{\Omega}^n$, this in turn implies that the canonical bundle $K_{M,I}$ is holomorphically trivial.
Consequently, if $(M, I, J)$ is compact with $K_{M,I}$ not holomorphically trivial, it may carry only HKT-Einstein metrics with positive $\lambda$.

\subsection{Homogeneous Hermitian manifolds}\label{SS:homogeneous}
We recall here basic notions on homogeneous Hermitian manifolds, referring the reader to \cite{PavingTheWay, NiWallach, Wang} for further details.

Let $(M, I)$ be a compact complex manifold, and let $G$ be a compact connected Lie group acting effectively, transitively, and holomorphically on $M$. We fix a point $p\in M$, and set $L=G_p$, so that $M$ can be written as the coset space $G/L$. 
We also assume that $L$ is connected.

By compactness of $L$, we can decompose $\frakg$ as $\frakg = \frakl \oplus\frakm$, for an $\Ad(L)$-invariant subspace $\frakm$. The latter is identified with $T_pM$ by the usual isomorphism $\frakm\ni X\mapsto X^*_p$, where $X^*_p = \frac{\dd{}}{\dd{t}}\big|_0 \exp(tX)\cdot p$.
In particular, the complex structure $I$ on $M$ induces a complex structure, denoted by the same symbol, on $\frakm$.

There exists a subalgebra $\mathfrak{c} \subset \frakg$ which is the centralizer of an abelian subalgebra and verifies $\frakl\subset\mathfrak{c}$ and $[\mathfrak{c}, \mathfrak{c}] = [\frakl,\frakl]$.
It is possible to write $\mathfrak{c} = \frakl\oplus\frakt$, where $\frakt$ is abelian, and
\begin{equation*}
	\frakg = \frakl \oplus \frakt \oplus \frakn,
\end{equation*}
for $\frakn$ the orthogonal complement with respect to a $\Ad(G)$-invariant inner product.
Denote by $C$ the connected subgroup corresponding to $\mathfrak{c}$, which is the centralizer of a torus in $G$. Now, $F=G/C$ is a generalized flag manifold, endowed with the complex structure $\mathcal{I} = I|_{\frakn}$. If $L$ is connected, then $L\subset C$ and there is a holomorphic fibration $M \to F$, often called Tits fibration.

We can write $\frakg = \frakz(\frakg) \oplus \frakg_{ss}$, where $\frakz(\frakg)$ and $\frakg_{ss}$ are respectively the center and the semisimple part of $\frakg$. 
By construction, $\mathfrak{c}$ is contained in the normalizer of $\frakl$ in $\frakg$, so $[\frakl, \frakt]=0$, and one can select a maximal abelian subalgebra of $\frakg$ of the form $\frakt_{\frakl}\oplus\frakt$, where $\frakt_{\frakl}$ is a maximal abelian subalgebra of $\frakl$. 
Then $\frakt_{\frakl}\oplus\frakt = \frakz(\frakg) \oplus \fraks$, where $\fraks = (\frakt_{\frakl} \oplus \frakt) \cap \frakg_{ss}$ is clearly a maximal abelian subalgebra of $\frakg_{ss}$, so that $\frakh = \fraks^{\CC}$ is a Cartan subalgebra of $\frakg_{ss}^{\CC}$. 

In the following, $R$ will denote the root system of $\frakg_{ss}^{\CC}$ with respect to $\frakh$, and $\calB$ the Killing form. For every $\lambda \in \frakh^*$, the element $t_{\lambda} \in \frakh$ is defined by $\calB(t_{\lambda}, H) = \lambda(H)$ for every $H\in\frakh$. We set $(\lambda, \mu) = \calB(t_{\lambda}, t_{\mu})$. For each $\gamma\in R$, we fix root vectors $E_{\gamma}$ in the root space $\frakg_{\gamma}$ such that $\calB(E_{\gamma}, E_{-\gamma}) = 1$. This implies
\[
	H_{\gamma} := [E_{\gamma}, E_{-\gamma}] = t_{\gamma}.
\]
Moreover, without loss of generality we can assume $\bar{E}_{\gamma} = - E_{-\gamma}$, where the bar symbol denotes conjugation with respect to $\frakg$ (see \cite[pag. 619]{PavingTheWay}). Given $\gamma$, $\sigma \in R$ such that $\gamma+\sigma\in R$, the real numbers $N_{\gamma,\sigma}$ are defined by $[E_{\gamma}, E_{\sigma}] = N_{\gamma,\sigma}E_{\gamma+\sigma}$, and they satisfy the following useful relations:
\begin{equation}\label{E:N}
\begin{gathered}
	N_{\alpha,\beta} = N_{\beta,-\alpha-\beta} = N_{-\alpha-\beta, \alpha} =
		 - N_{\beta, \alpha} = -N_{-\alpha,-\beta},\\
	N_{\alpha + \beta, \gamma}N_{\alpha,\beta} = N_{\alpha+\gamma, \beta}N_{\alpha,\gamma} + N_{\beta,\gamma}N_{\alpha, \beta+\gamma},\\
	N_{\alpha,-\beta}^2 + N_{\alpha,\beta}^2 = (\alpha, \beta).
\end{gathered}
\end{equation}
The complexified Lie algebras can be decomposed as
\[
	\frakg^{\CC} 
	= \frakz(\frakg)^{\CC}
		\oplus 	\Big( 
						\frakh \oplus \bigoplus_{\gamma\in R} \frakg_{\gamma} 
				\Big),
	\quad
	\mathfrak{c}^{\CC} = 
				\Big( 
					\frakt_{\frakl}^{\CC} \oplus \bigoplus_{\gamma\in R_{\frakl}} \frakg_{\gamma} 
				\Big)
				\oplus \frakt^{\CC},
	\quad
	\mathfrak{l}^{\CC} = 
				\Big( 
					\frakt_{\frakl}^{\CC} \oplus \bigoplus_{\gamma\in R_{\frakl}} \frakg_{\gamma}
				\Big),
\]
where $R_{\frakl}$ is a subsystem of $R$. Letting $\hat{R} := R \setminus R_{\frakl}$, one obtains $\frakn^{\CC} = \bigoplus_{\gamma\in\hat{R}} \frakg_{\gamma}$.

Since $G/C$ is a generalized flag manifold and  $I|_{\frakn}$ coincides with $\mathcal{I}$, the subset $\hat{R}$ is symmetric (\cite{PavingTheWay}), in other words there is  an ``ordering'' $\hat{R} = \hat{R}^{+} \cup \hat{R}^{-}$  determined by
\[
	\frakn^{1,0} = \bigoplus_{\gamma\in\hat{R}^+} \frakg_{\gamma}, 
	\quad\quad
	\frakn^{0,1} = \bigoplus_{\gamma\in\hat{R}^-} \frakg_{\gamma}.
\]

Let $g$ be a $G$-invariant Riemannian metric on $M$. We can think of $g$ as an $\Ad(L)$-invariant scalar product on $\frakm$, compatible with the linear complex structure on $\frakm$. We will extend $g$ to $\frakm^{\CC}$ as a $\CC$-bilinear form. Recall that $g$ is said to be \emph{naturally reductive} with respect to the decomposition $\frakg = \frakl \oplus \frakm$ if we have
\begin{equation}\label{E:naturally}
	g([X,Y]_{\frakm} , Z) + g(Y, [X,Z]_{\frakm}) = 0
\end{equation}
for every $X,Y,Z \in \frakm$. In this case, it is easy to see that $g(E_{\gamma}, \bar{E}_{\sigma}) = 0$, for every $\gamma, \sigma \in \hat{R}^+$ with $\gamma\ne\sigma$.
If $L$ is trivial, so that $M = G$, a left invariant metric is naturally reductive if and only if it is bi-invariant.

By theory of homogeneous spaces (see \cite[Chapter X]{KobayashiNomizu}), an invariant connection $\nabla$ on $M=G/L$ can be identified with the linear map $\Lambda \colon \frakm \to \End(\frakm)$ given by
\[
	(\Lambda(X)Y)^*_p = - \left( A_{X^*} Y^* \right)_p
\]
for every $X,Y\in\frakm$. Here $A_{X^*}$ is the Nomizu operator associated to $\nabla$ and the fundamental vector field $X^*$, defined as
\[
	A_{X^*} = \mathcal{L}_{X^*}  - \nabla_{\!X^*}.
\]
The torsion $T^{\nabla}$ and the curvature $R^{\nabla}$ at the point $p$ are given by
\begin{equation}\label{E:tensors}
\begin{gathered}
	T^{\nabla}(X, Y) = \Lambda(X)Y - \Lambda(Y)X - [X, Y]_{\frakm},\\
	R^{\nabla}(X,Y) =
	[\Lambda(X), \Lambda(Y)]
	- \Lambda([X,Y]_{\frakm}) - \ad([X,Y]_{\frakl}),
\end{gathered}
\end{equation}
for all $X,Y\in\frakm$.
Conversely, any linear map $\Lambda \colon \frakm \to \End(\frakm)$ satisfying appropriate conditions yields an invariant connection on $M$.
The connection corresponding to $\Lambda \equiv 0$ is called \emph{canonical connection}, and it  preserves any $G$-invariant tensor. Its torsion is $T(X,Y) = - [X,Y]_{\frakm}$ for every $X,Y \in \frakm$. In particular, given an invariant Hermitian structure $(g, I)$ on $M$ as above, with $g$ naturally reductive, the canonical connection is Hermitian. Its torsion $3$-tensor is given by
\[
	c(X,Y,Z) = - g([X,Y]_{\frakm},Z), \quad \forall\, X,Y,Z\in \frakm,
\]
hence it is totally skew-symmetric. It follows that the canonical connection coincides with the Bismut connection of $(g,I)$. 
In the case of Lie groups, it is easy to check that $c$ is also closed, which means that $g$ is SKT.

\subsection{Joyce hypercomplex structures}\label{SS:Joyce}
We briefly recall the construction, due to Joyce in \cite{Joyce}, of compact homogeneous spaces equipped with invariant hypercomplex structures. 
The idea was already present in the physics literature, see~\cite{SSTV}.

Consider a connected, compact Lie group $G$, with a fixed maximal torus. 
Let $R$ be the corresponding root system for the semisimple part of $\frakg^{\CC}$, and choose an ordering $R = R^+ \cup R^-$.
For each root $\alpha$, we denote by $\fraks_{\alpha}\simeq\mathfrak{sl}(2,\CC)$ the subalgebra of $\frakg^{\CC}$ generated by $\frakg_{\alpha}$ and $\frakg_{-\alpha}$. 
Let $\alpha_1$ be a maximal root in $R^+$. Let us define
\[
	R^{+}_1 = \{ \gamma\in R^+ \colon (\alpha_1, \gamma) \ne 0 \}, \quad
	\Theta_1 = \{ \gamma\in R \colon (\alpha_1, \gamma) = 0 \},
\]
and inductively for $j \ge 2$,
\[
	R^{+}_j = \{ \gamma\in \Theta_{j-1}\cap R^+ \colon (\alpha_j, \gamma) > 0 \}, \quad
	\Theta_j = \{ \gamma\in \Theta_{j-1} \colon (\alpha_j, \gamma) = 0 \},
\]
where each root $\alpha_j$, for $j \ge 2$, is chosen to be maximal in the subsystem $\Theta_{j-1}$. 
There is a recursive decomposition of $\frakg$ which at iteration $m$ has the form
\[	
	\frakg 	= \frakb_m 
			\oplus  	\bigoplus_{j=1}^m \frakd_j 
			\oplus  	\bigoplus_{j=1}^m \frakf_j ,
\]
where
\[
	\frakd_j = 	\frakg \cap \fraks_{\alpha_j},
	\qquad
	\frakf_j =		\frakg \cap \bigoplus_{
								\substack{
											\beta \in R_j^+ \\ 
											\beta\ne\alpha_j
								}			
				} \frakg_{\beta} \oplus \frakg_{-\beta} 	,
	\qquad
	\frakb_m = 	\frakg\cap\left(
				\bigcap_{j=1}^m \mathfrak{Z}_{\frakg^{\CC}} (\fraks_{\alpha_j})  
				\right) .
\]
We will refer to this as a \emph{Joyce decomposition} of $\frakg$. 
The iteration can be performed until a step $d$ is reached in which $\frakb_d$ is abelian.
Note that $\frakz(\frakg) \subset \frakb_d$.
The roots $\alpha_j$ are \emph{strongly orthogonal roots}, namely $\alpha_i\pm\alpha_j \notin R\cup\{0\}$ for every $i\neq j$, and $\{\alpha_1,\dots,\alpha_d\}$ is a maximal strongly orthogonal subset of $R$.
Since $\alpha_j$ is maximal in $\Theta_{j-1}$, it is well know that $-1 \leq 2\frac{(\gamma, \alpha_j)}{(\alpha_j, \alpha_j)} \leq 1$ for every $\gamma\in\Theta_{j-1}$, $\gamma\neq\alpha_{j}$, so $\gamma\in R_j^+$ if and only if $2\frac{(\gamma, \alpha_j)}{(\alpha_j, \alpha_j)} = 1$.

Being $G$ compact, on $\frakg$ we can fix an $\Ad(G)$-invariant, negative definite scalar product, which coincides with $\calB$ on $\frakg_{ss}$. The Joyce decomposition is clearly orthogonal with respect to it. It should be noted that two different decompositions are related by an inner automorphism of $\frakg$.

Now suppose $L\subset G$ is a closed subgroup, and let $M=G/L$. Assume there exists a maximal torus of $G$ and a set of strongly orthogonal roots such that, for the corresponding Joyce decomposition, and for an integer $m\leq d$, the Lie algebra of $L$ satisfies the following properties:
\begin{equation}\label{E:decl}
	\frakl = 	(\frakb_d \cap \frakl) \,
			\oplus  	\bigoplus_{j=m+1}^d \!\frakd_j 
			\oplus  	\bigoplus_{j=m+1}^d \!\frakf_j ,
	\quad\quad
	\dim\frakb_m - \dim\frakl \equiv m \mod 4.
\end{equation}
Note that $\frakl \subset \frakb_m$, the semisimple part of $\frakl$ coincides with that of $\frakb_m$, and the center of $\frakl$ is contained in the center of $\frakb_m$.
Denoting by $\frakm$ the orthogonal complement of $\frakl$, it is proven in \cite{DimitrovTsanovII} that
\begin{equation}\label{E:decm}
	\frakm = (\frakb_d \cap \frakm)
			\oplus  	\bigoplus_{j=1}^m \frakd_j 
			\oplus  	\bigoplus_{j=1}^m \frakf_j .
\end{equation}
Setting $\mathfrak{v}:=\frakb_d \cap \frakl$, $\mathfrak{u}:=\frakb_d \cap \frakm$, it is also proved  that $\frakb_d = \mathfrak{v}\oplus\mathfrak{u}$ .

A hypercomplex structure on $\frakm$ is constructed in the following way.
For every $j = 1,\dots, m$, we pick an isomorphism $\frakd_j \simeq \frsu(2)$, which is clearly determined by the choice of  a basis $\{ X_2^j, X_3^j, X_4^j \}$ of $\frakd_j$ satisfying
\begin{equation}\label{E:bracket}
	[X_2^j, X_3^j] = 2X_4^j, \quad
	[X_3^j, X_4^j] = 2X_2^j, \quad
	[X_4^j, X_2^j] = 2X_3^j.	
\end{equation}
We select $m$ linearly independent vectors $X_1^1, \dots, X_1^m \in \fraku$, so that $\fraku = \fraku_0 \oplus \langle X_1^1, \dots, X_1^m \rangle$ for a subspace $\fraku_0 \simeq\RR^{4k}$.
We define
\[
	\frakm_j := \RR X_1^j \oplus \frakd_j \oplus \frakf_j \quad \forall \, j=1, \dots, m.
\]
It is easy to show that $\ad(\frakl)(\frakm_j) \subset \frakd_j\oplus\frakf_j$ for every $j = 1,\dots,m$.
In particular, if $L$ is connected, each subspace $\frakm_j$ is an $\Ad(L)$-submodule of $\frakm$.
Note that $\frakm = \fraku_0 \oplus\bigoplus_{j=1}^{m} \frakm_j$.
Now $I$ and $J$ are  defined on $\frakm$ as follows
\begin{enumerate}
\item[(i)]\label{E:Joyce1}
For every $j = 1,\dots,m$, let
\[
	IX_1^j = X_2^j,	\quad
	IX_3^j = X_4^j,	\quad
	JX_1^j = X_3^j,	\quad
	JX_2^j = - X_4^j,
\]
and $I^2 = J^2 
= - \id$.
\item[(ii)]\label{E:Joyce2}
For every $j=1,\dots,m$, let $I,J$ act on $\frakf_j$ as
\[
	I|_{\frakf_j} = \ad(X_2^j), \quad
	J|_{\frakf_j} = \ad(X_3^j). \quad
\]
\item[(iii)]
On $\fraku_0$, the complex structures $I$ and $J$ are given by the choice of an isomorphism $\fraku_0 \simeq \mathbb{H}^k$.
\end{enumerate}
It is shown by Joyce in~\cite{Joyce} that this is a well defined $\ad(\frakl)$-invariant hypercomplex structure on $\frakm$ and, if $L$ is connected, that it gives rise to an invariant (integrable) hypercomplex structure on $M=G/L$, using results by Wang (\cite{Wang}).

On the other hand, one can consider a generic homogeneous hypercomplex manifold $(M=G/L, I, J)$ where $G$ is a connected compact Lie group and $L$ a closed subgroup (not necessarily connected), with the action of $G$ on $M$ being effective.
It is proved in \cite{DimitrovTsanovII} that $I$ and $J$ arise from the above construction (see also \cite{BedulliGoriPodesta}).
More precisely, considering first the homogeneous complex manifold $(G/L, I)$, one can choose a Cartan subalgebra $\frakh\subset\frakg_{ss}^{\CC}$ and a splitting of the corresponding root system $R=R_{\frakl} \cup \hat{R}$, $\hat{R} = \hat{R}^+ \cup \hat{R}^-$, as in section \ref{SS:homogeneous}. Then there is a set of strongly orthogonal roots $\{ \alpha_1, \dots ,\alpha_d \}$ such that, considering the Joyce decomposition corresponding to these data, one has $\frakl\subset\frakb_m$ and $\dim\frakb_m - \dim\frakl \equiv m \mod 4$.
In the notations of Section~\ref{SS:homogeneous}, the following relations hold:
\begin{equation}\label{E:mainEquation}
\begin{aligned}
	IE_{\beta} 	&= iE_{\beta}, \quad \forall\,\beta\in R_j^+,\\
	JE_{\alpha_j} 	&= k_j(H_{\alpha_j} + iIH_{\alpha_j}),\\
	JE_{\gamma} 	&= 2k_j N_{\gamma,-\alpha_j} E_{\gamma - \alpha_j},
		\quad \forall\,\gamma\in R_j^+, \, \gamma\ne\alpha_j,
\end{aligned}
\end{equation}
where $k_1, \dots, k_m$ are complex numbers satisfying $\abs{k_j}^2 = \frac{1}{2\abs{\alpha_j}^2}$.
In particular, setting $R_j^- = - R_j^+$ and $R^j = R_j^+ \cup R_j^-$ we have $\hat{R}^+ = \bigcup_j R_j^+$ and $\hat{R}^- = \bigcup_j R_j^-$. Defining
\begin{equation}\label{E:mainbasis}
\begin{aligned}
	X_1^j& 	= - \frac{2i}{\abs{\alpha_j}^2} IH_{\alpha_j},&					
	X_2^j&	= 	\frac{2i}{\abs{\alpha_j}^2} H_{\alpha_j},& 		\\
	X_3^j&	= 	2(	\overline{k}_j E_{\alpha_j} - k_j E_{-\alpha_j}	),&		\qquad
	X_4^j&	= 	2i(	\overline{k}_j E_{\alpha_j} + k_j E_{-\alpha_j}	),&
\end{aligned}
\end{equation}
then 
$I$ and $J$ can be given via the same relations given by Joyce.
These vectors yield a basis of each $\frakd_j$ satisfying \eqref{E:bracket}.\\

From now on, we shall always assume that $\dim\frakb_m - \dim\frakl = 4$ in decomposition \eqref{E:decl}. 
With this assumption, we call $(G/L,I,J)$ a \emph{Joyce hypercomplex manifold}.
In particular we have $\frakm = \bigoplus_{j=1}^{m} \frakm_j$.
Moreover we shall choose a Cartan subalgebra  and a set of strongly orthogonal roots as described above, in order to have relations~\eqref{E:mainEquation} on the corresponding Joyce decomposition.

We stress that different isomorphisms $\frakd_j \simeq\frsu(2)$ lead to equivalent hypercomplex structures, but the choice of the identification $\fraku\simeq\RR^{m}$ is essential (see~\cite{GentiliEtAl}).

\begin{remark}\label{R:simple}
Note that the Joyce decomposition is compatible with the decomposition $\frakg = \frakz(\frakg) \oplus \frakg_1\oplus\dots\oplus\frakg_t$, where $\frakg_k$ are simple ideals and $\frakz(\frakg)$ is the center. 
Considering the splitting of the root system $R$ into irreducible components, $R = \Delta_1 \sqcup\dots\sqcup \Delta_t$, it is easy to check that $\alpha_j \in \Delta_k$ for some $k$ implies $R_j \subset \Delta_k$. In particular $\frakd_j \oplus \frakf_j \subset \frakg_k$. 
\end{remark}

\section{Invariant HKT metrics}\label{S:HKT}
In this section, we consider a Joyce hypercomplex manifold $(M= G/L, I, J)$, and we study invariant HKT metrics on it, establishing Theorem~\ref{T:HKT}.
Initially we do not require $L$ to be connected.

As explained in section~\ref{SS:Joyce}, we will assume that  formulas~\eqref{E:mainEquation} hold on each layer $\frakm_j$. 
Let $g$  be a $G$-invariant hyperhermitian metric on $M$.
We note that the HKT condition $\partial \Omega = 0$ can be rewritten as
\[
	\dd{\Omega (X,Y,Z)} = 0 \quad  \text{ for every $X,Y,Z$ of type $(1,0)$ w.r.t. $I$.}
\]
From now on, we shall identify $g$ and $\Omega$ with the corresponding tensors on $\frakm^{\CC}$. Since $\Omega(U, V) = - g(U, JV)$ for every $U,V\in\frakm^{1,0}$, one has
\begin{equation}\label{E:diffOmega}
\begin{aligned}
	\dd{\Omega(X,Y,Z)} 
	&=	- \Omega( [X,Y]_{\frakm^{10}}, Z )
		- \Omega( [Z,X]_{\frakm^{10}}, Y )
		- \Omega( [Y,Z]_{\frakm^{10}}, X )\\
	&=	g( [X,Y]_{\frakm^{10}}, JZ ) +
		g( [Z,X]_{\frakm^{10}}, JY ) +
		g( [Y,Z]_{\frakm^{10}}, JX ),
\end{aligned}
\end{equation}
so $g$ is HKT if and only if
\begin{equation}\label{E:dOmega}
	g( [X,Y]_{\frakm^{10}}, JZ ) +
	g( [Z,X]_{\frakm^{10}}, JY ) +
	g( [Y,Z]_{\frakm^{10}}, JX ) = 0
	\quad \forall\, X,Y,Z \in \frakm^{1,0}.
\end{equation}

Let us assume that $g$ is actually HKT. We use formulas~\eqref{E:mainEquation} to deduce the following necessary conditions.
For brevity, let us denote $H_j = H_{\alpha_j} - i I H_{\alpha_j}$, for every $j\in\{ 1, \dots, m \}$.

\begin{lemma}\label{L:orthogonality}
Let $j\in\{ 1, \dots, m \}$. Then:
\begin{enumerate}
\item[(i)]
$g( E_{\beta}, \bar{H}_j ) = 0 		\quad \forall\, \beta \in R_j^+$;
\item[(ii)]
$g( E_{\beta}, E_{-\gamma} ) = 0	\quad \forall\, \beta,\gamma \in R_j^+, \, \beta\ne\gamma$;
\item[(iii)]
$g( E_{\beta}, E_{-\beta} ) = g( E_{\alpha_j}, E_{-\alpha_j} ) 	\quad \forall\, \beta \in R_j^+$.
\end{enumerate}
\end{lemma}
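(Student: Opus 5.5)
The plan is to test the HKT condition~\eqref{E:dOmega} on suitable triples of $(1,0)$-vectors of the layer $\frakm_j^{\CC}$, and to combine the resulting identities with the $J$-hermitian property $g(JU,JV)=g(U,V)$. The bracket and $J$-relations needed come from~\eqref{E:mainEquation}, \eqref{E:mainbasis} and the structure constants~\eqref{E:N}.

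First I will record the basic data. The space $\frakm_j^{1,0}$ is spanned by $H_j=H_{\alpha_j}-iIH_{\alpha_j}$ (which is of type $(1,0)$ because $IH_j=iH_j$) and by the $E_\beta$ with $\beta\in R_j^+$. From~\eqref{E:mainEquation} we have $JE_{\alpha_j}=k_j\bar H_j$ and, for $\gamma\neq\alpha_j$, $JE_\gamma=2k_jN_{\gamma,-\alpha_j}E_{\gamma-\alpha_j}$. Inverting the relation $J^2=-\id$ on $\frakd_j\oplus\RR X_1^j$ via~\eqref{E:mainbasis} should give $JH_j=c_jE_{-\alpha_j}$ for an explicit constant $c_j$ (a multiple of $\bar k_j^{-1}$). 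Since $2(\gamma,\alpha_j)/|\alpha_j|^2=1$ on $R_j^+$, we get, for $\beta\in R_j^+\setminus\{\alpha_j\}$:
\begin{itemize}
\item $\alpha_j-\beta\in R_j^+$;
\item $\beta+\alpha_j\notin R$;
\item $\beta+\gamma\in R$ (with $\gamma\in R_j^+$) only when $\beta+\gamma=\alpha_j$.
\end{itemize}
I also need the eigenvalues of $\ad(H_j)$ on $E_\beta$, namely $\beta(H_{\alpha_j})-i\beta(IH_{\alpha_j})$. I expect $\beta(IH_{\alpha_j})=0$ for $\beta\in R_j$, since $IH_{\alpha_j}\propto X_1^j\in\fraku\subset\frakb_d$, which centralizes the relevant root vectors modulo the torus part. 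This needs a short check.

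For (i), I plug $X=H_j$, $Y=E_\beta$, $Z=E_{\alpha_j}$ into~\eqref{E:dOmega}. The term $[E_\beta,E_{\alpha_j}]$ vanishes, and $[H_j,E_\beta]$ is a multiple of $E_\beta$ with nonzero coefficient $\tfrac12|\alpha_j|^2$. For $\beta\ne\alpha_j$ the remaining term involves $g(E_{\alpha_j},E_{\beta-\alpha_j})$. I will eliminate it by a second triple, or by using $J$-invariance to rewrite $g(E_{\alpha_j},E_{\beta-\alpha_j})$ as a multiple of $g(\bar H_j,E_{\beta})$. This yields a homogeneous linear system in $g(E_\beta,\bar H_j)$ whose only solution is $0$. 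The case $\beta=\alpha_j$ follows from $J$-invariance: $g(E_{\alpha_j},\bar H_j)$ is proportional to $g(JE_{\alpha_j},J\bar H_j)=|k_j|^{2}\,g(\bar H_j,\cdot)$-type quantities, which reduce to $g(\bar H_j,E_{\alpha_j})$ with a coefficient different from $1$.

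For (ii), I take $X=E_\beta$, $Y=E_{\gamma'}$, $Z=H_j$ with $\gamma'\in R_j^+$ chosen so that $J$ of the outputs produce $E_{-\gamma}$; concretely $\gamma'=\alpha_j-\gamma$ or $\gamma'=\alpha_j$. The terms of~\eqref{E:dOmega} then become $g(E_\beta,E_{-\gamma})$ times nonzero eigenvalues, plus terms already killed by (i). Combining the relations for the pair $(\beta,\gamma)$ and the pair $(\alpha_j-\gamma,\alpha_j-\beta)$, the latter linked to the former by $J$-invariance, gives vanishing.

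For (iii), I use $X=E_\beta$, $Y=E_{\alpha_j-\beta}$, $Z=H_j$. Then $[E_\beta,E_{\alpha_j-\beta}]=N_{\beta,\alpha_j-\beta}E_{\alpha_j}$, and $J H_j\propto E_{-\alpha_j}$. The identity becomes a linear relation among $g(E_{\alpha_j},E_{-\alpha_j})$, $g(E_\beta,E_{-\beta})$ and $g(E_{\alpha_j-\beta},E_{\beta-\alpha_j})$. Separately, $J$-invariance applied to $E_\beta$ gives
\[
g(E_\beta,E_{-\beta})=4|k_j|^2N_{\beta,-\alpha_j}^2\,g(E_{\beta-\alpha_j},E_{\alpha_j-\beta}).
\]
Using $|k_j|^2=1/(2|\alpha_j|^2)$ and $N_{\beta,-\alpha_j}^2=|\alpha_j|^2/2$ (from the last line of~\eqref{E:N}, since $\beta+\alpha_j\notin R$), this says the values at $\beta$ and at $\alpha_j-\beta$ coincide. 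Substituting back then forces both to equal $g(E_{\alpha_j},E_{-\alpha_j})$.

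I expect the main obstacle to be the bookkeeping of constants: the exact form of $JH_j$, the sign conventions $\bar E_\gamma=-E_{-\gamma}$, and the $N$'s. These must conspire so that the linear systems are nondegenerate. I would first carry out the computation in $\frsu(3)$, with $\alpha_j$ the highest root and $\beta$ a simple root, to fix all constants before writing the general argument.
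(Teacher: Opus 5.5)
Your strategy is the paper's. You use the same test triples $(H_j,E_\beta,E_{\alpha_j})$, $(H_j,E_\beta,E_{\alpha_j-\gamma})$, $(H_j,E_\beta,E_{\alpha_j-\beta})$ in \eqref{E:dOmega}, and the $J$-Hermitian identity $g(U,JV)=-g(JU,V)$ to merge the two surviving terms. Your version of (iii), via $g(E_\beta,E_{-\beta})=4\abs{k_j}^2N_{\beta,-\alpha_j}^2\,g(E_{\beta-\alpha_j},E_{\alpha_j-\beta})=g(E_{\beta-\alpha_j},E_{\alpha_j-\beta})$, is a correct rearrangement of the same computation.

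However, the fact you plan to use for non-degeneracy in (i) and (ii) is false: $\beta(IH_{\alpha_j})$ does not vanish in general for $\beta\in R_j^+\setminus\{\alpha_j\}$. The algebra $\frakb_d$ centralizes $\fraks_{\alpha_j}$, so $\alpha_j(IH_{\alpha_j})=0$ holds, but it does not centralize $\frakf_j$. For $G=\SU(3)$ one has $X_1^1\in\fraku=\frakb_1=\RR\,\diag(i,i,-2i)$, and $\beta=\vareps_1-\vareps_3\in R_1^+$ gives $\beta(X_1^1)\neq 0$. Hence $\beta(H_j)=\tfrac12\abs{\alpha_j}^2-i\,\beta(IH_{\alpha_j})$, not $\tfrac12\abs{\alpha_j}^2$. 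The coefficients you must show to be non-zero, $(\beta+\alpha_j)(H_j)$ in (i) and $(\beta+\alpha_j-\gamma)(H_j)$ in (ii), are therefore genuinely complex, and your stated reason for their non-vanishing fails (the $\frsu(3)$ test you propose would reveal this).

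The missing ingredient is the paper's real-part argument. We have $IH_{\alpha_j}=\tfrac{i}{2}\abs{\alpha_j}^2X_1^j$, with $X_1^j\in\frakb_d$ lying in the maximal torus of $\frakg$, so every root takes a real value on $IH_{\alpha_j}$. The two coefficients then have real parts $(\beta+\alpha_j,\alpha_j)=\tfrac32\abs{\alpha_j}^2$ and $(\beta+\alpha_j-\gamma,\alpha_j)=\abs{\alpha_j}^2$, so neither vanishes. The sign produced by the $J$-merge matters here: a coefficient $(\beta-\alpha_j+\gamma)(H_j)$ would have zero real part. In (iii) the issue does not arise, because after your $J$-substitution only $(\beta+(\alpha_j-\beta))(H_j)=\alpha_j(H_j)=\abs{\alpha_j}^2$ survives. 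With this correction your argument goes through. In (ii) a single triple suffices, and the cases $\beta=\alpha_j$ or $\gamma=\alpha_j$ reduce to (i) after applying $J$.
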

\begin{proof}
For any $\beta\in R_j^+$, $\beta \ne \alpha_j$, consider relation~\eqref{E:dOmega} with $(X,Y,Z) = (H_j, E_{\beta}, E_{\alpha_j})$. Since $\alpha_j + \beta$ is not a root, we get
\[
	0 =
	\beta(H_j)g(E_{\beta}, JE_{\alpha_j}) - \alpha_j(H_j)g(E_{\alpha_j}, JE_{\beta}) =
	(\beta+\alpha_j)(H_j) g(E_{\beta}, \bar{H}_j).
\]
If $g(E_{\beta}, \bar{H}_j) \ne 0$, then $0 = (\beta + \alpha_j)(H_j) = (\beta + \alpha_j, \alpha_j) - (\beta + \alpha_j)( iIH_{\alpha_j})$; 
since $(\beta + \alpha_j, \alpha_j)$ is the real part of this complex number, we would have
$0 = (\beta + \alpha_j, \alpha_j) = (\beta, \alpha_j) + (\alpha_j, \alpha_j) > 0$, a contradiction. 
Note that also $g(E_{\alpha_j}, \bar{H}_j) = 0$, since $\bar{H}_j$ is proportional to $JE_{\alpha}$, thus (i) is proved.

To show part (ii), let $\beta,\gamma \in R_j^+$ be such that $\beta \ne \gamma$. 
We can assume both $\beta$ and $\gamma$ differ from $\alpha_j$, otherwise applying $J$ we are in the previous case.
Set $\sigma = \alpha_j - \gamma \in R_j^+$.
We note $\beta + \sigma$ is not a root, otherwise we would have $\beta + \sigma = \alpha_j$ (by \cite[Prop. 2.2]{DimitrovTsanovI}) hence $\beta = \gamma$.
Therefore relation~\eqref{E:dOmega} with
$(X,Y,Z) = ( H_j, E_{\beta}, E_{\sigma} )$ gives
\[
	(\beta + \sigma)(H_j) g(E_{\beta}, E_{-\gamma}) = 0.
\]
If $g(E_{\beta}, E_{-\gamma}) \ne 0$, then $0 = (\beta + \sigma)(H_j) = (\alpha_j, \alpha_j) + (\beta - \gamma, \alpha_j) - (\beta + \sigma)( iIH_j )$, and taking the real part, $(\alpha_j, \alpha_j) + (\beta - \gamma, \alpha_j) = 0$.
But by conjugation we also have  $g(E_{\gamma}, E_{-\beta}) \ne 0$, hence $(\alpha_j, \alpha_j) + (\gamma - \beta, \alpha_j) = 0$.
Summing the two items yields $(\alpha_j, \alpha_j) = 0$, a contradiction. This proves claim (ii).

For part (iii), given $\beta\in R_j^+$, $\beta\ne\alpha_j$, we set $\rho = \alpha_j - \beta \in R_j^+$, and consider once again \eqref{E:dOmega} with $(X,Y,Z) = (H_j, E_{\beta}, E_{\rho})$:
\[
	\alpha_j(H_j)g(E_{\beta}, JE_{\rho}) - \frac{N_{\beta,\rho}}{\overline{k}_j}g(E_{\alpha_j}, E_{-\alpha_j}) = 0.
\]
Since
$JE_{\rho} = 2k_j N_{\rho, -\alpha_j} E_{\rho - \alpha_j} = 2k_j N_{\beta, \rho}E_{-\beta}$,
\[
\begin{aligned}
0	&= 	\alpha_j(H_j) 2k_j N_{\beta, \rho} g(E_{\beta}, E_{-\beta}) 
			- \frac{N_{\beta, \rho}}{\overline{k}_j} g(E_{\alpha_j}, E_{-\alpha_j}) \\
	&=	\frac{N_{\beta, \rho}}{\overline{k}_j}
			\left(
			(\alpha_j,\alpha_j)2\abs{k_j}^2  g(E_{\beta}, E_{-\beta})
			- g(E_{\alpha_j}, E_{-\alpha_j}) 
			\right)  \\
	&=	\frac{N_{\beta, \rho}}{\overline{k}_j} 	
			\left(
			g(E_{\beta}, E_{-\beta}) - g(E_{\alpha_j}, E_{-\alpha_j}) 
			\right),
\end{aligned}
\]
where we used that $\abs{k_j}^2 = \frac{1}{2\abs{\alpha_j}^2}$. It follows that $g(E_{\beta}, E_{-\beta}) = g(E_{\alpha_j}, E_{-\alpha_j})$, and the proof is complete.
\end{proof}

\begin{lemma}\label{L:levels}
Let $j,l \in \{1,\dots,m\}$ with $j\ne l$. Then $g( \frakm_j, \frakm_l ) = 0$.
\end{lemma}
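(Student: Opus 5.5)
We use the same method as in Lemma~\ref{L:orthogonality}: evaluate the HKT identity~\eqref{E:dOmega} on well-chosen triples of $(1,0)$-vectors. The eigenvectors of $\ad(H_j)$ will produce scalar factors whose real parts are forced to be nonzero.

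First we fix a basis of $\frakm_j^{1,0}$. It is spanned by $H_j$ together with the $E_\beta$, $\beta\in R_j^+$. Indeed, $X_1^j - iIX_1^j$ is proportional to $H_j$, and $\frakd_j\oplus\frakf_j$ contributes the root vectors. Hence $\frakm_j^{\CC}$ is spanned by $H_j,\bar H_j$ and $E_{\pm\beta}$, $\beta\in R_j^+$. Because $g$ is real and $I$-Hermitian, $g$ pairs $(1,0)$ only with $(0,1)$. So it suffices to prove the vanishing of
\[
g(H_j,\bar H_l),\quad g(E_\beta,\bar H_l),\quad g(E_\beta,E_{-\gamma}),\qquad \beta\in R_j^+,\ \gamma\in R_l^+ ,\ j\neq l;
\]
the remaining pairings follow by conjugation. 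We also use $J$-invariance: $J$ maps $\frakm_l^{1,0}$ to $\frakm_l^{0,1}$ by~\eqref{E:mainEquation}. For instance $\bar H_l$ is proportional to $JE_{\alpha_l}$, and $E_{-\gamma}$ is proportional to $J E_{\alpha_l-\gamma}$. Therefore every pairing above is of the form $g(U,JV)$ with $U\in\frakm_j^{1,0}$ and $V\in\frakm_l^{1,0}$.

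\emph{Root vectors against root vectors.} Take $\beta\in R_j^+$ and $\sigma\in R_l^+$, and apply~\eqref{E:dOmega} to $(X,Y,Z)=(H_j,E_\beta,E_\sigma)$. Since $\alpha_j,\alpha_l$ are strongly orthogonal and $\beta,\sigma$ lie in different layers, the sum $\beta+\sigma$ should not be a root in $\hat R^+$ (Prop.~2.2 of \cite{DimitrovTsanovI}). Also $\sigma(H_j)$ involves $(\sigma,\alpha_j)$, which vanishes when $l>j$ because $\sigma\in\Theta_j$. The identity should therefore reduce to $\beta(H_j)\,g(E_\beta,JE_\sigma)=0$, whose real part gives $(\beta,\alpha_j)\neq0$. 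Hence $g(E_\beta,JE_\sigma)=0$.

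For $l<j$ we swap the roles and use $H_l$ instead. The term $[H_j,E_\sigma]$ may be nonzero when $(\sigma,\alpha_j)\neq0$. We therefore order the indices so that the element $H$ we use is orthogonal to the other root, and treat the term $[E_\beta,E_\sigma]_{\frakm^{1,0}}$ carefully. This is the delicate point. If $\beta+\sigma$ is a root, we choose instead the triple with $H_{\min(j,l)}$ and exploit that roots of deeper layers are orthogonal to $\alpha_{\min(j,l)}$.

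\emph{Cartan part.} We evaluate~\eqref{E:dOmega} on $(H_j,H_l,E_{\alpha_l})$. Since $[H_j,H_l]=0$, we get
\[
\alpha_l(H_j)\,g(E_{\alpha_l},JH_l)-\alpha_l(H_l)\,g(E_{\alpha_l},JH_j)=0 .
\]
We have $(\alpha_l,\alpha_j)=0$, but $\alpha_l(iIH_{\alpha_j})$ may not vanish. This is the main obstacle, since $I$ on $\fraku$ mixes the $X_1$'s arbitrarily. We plan to combine this equation with the one for $(H_l,H_j,E_{\alpha_j})$, together with the already established vanishing $g(E_\beta,\bar H_l)=0$, which we obtain via triples $(H_j,E_\beta,E_{\alpha_l})$ as in Lemma~\ref{L:orthogonality}(i) using $\mathrm{Re}\,(\beta+\alpha_l)(H_j)=(\beta,\alpha_j)>0$ when $l>j$. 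Comparing real parts, which equal $|\alpha_l|^2\neq0$, should then force $g(H_j,\bar H_l)=0$.

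If the cross terms do not cancel directly, we will use the symmetry $g(JU,JV)=g(U,V)$ to obtain a second linear relation between $g(H_j,\bar H_l)$ and $g(H_l,\bar H_j)$, and show that the resulting $2\times2$ system is nondegenerate.
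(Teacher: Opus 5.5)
Your overall strategy is the paper's: evaluate \eqref{E:dOmega} on $(H_j,E_\beta,E_\sigma)$ with $j<l$ and on a triple of the form $(H_j,H_l,E_{\alpha})$, take real parts, and use $J$-invariance. Two steps are not right as written, and the second is a genuine gap.

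\textbf{The root case (repairable).} Your claim that $\beta+\sigma$ is not a root is false. In $\mathfrak{su}(n)$, take $\beta=\varepsilon_1-\varepsilon_3\in R_1^+$ and $\sigma=\alpha_2=\varepsilon_3-\varepsilon_4$. Then $\beta+\sigma=\varepsilon_1-\varepsilon_4\in R_1^+$. Switching to $H_{\min(j,l)}$ does not remove the extra term $N_{\beta,\sigma}\,g(E_{\beta+\sigma},JH_j)$, since you are already using $H_{\min(j,l)}$. What kills it is the following: $JH_j$ is a multiple of $E_{-\alpha_j}$, and $\beta+\sigma\in R_j^+\setminus\{\alpha_j\}$ by Lemma~\ref{L:rootsum} together with $\sigma\perp\alpha_j$. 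So Lemma~\ref{L:orthogonality}(ii) applies. Also, the other two terms do not reduce to $\beta(H_j)\,g(E_\beta,JE_\sigma)$, because $\sigma(iIH_{\alpha_j})$ need not vanish for $\sigma\neq\alpha_l$. You must combine them via $g(E_\sigma,JE_\beta)=-g(JE_\sigma,E_\beta)$ into $(\beta+\sigma)(H_j)\,g(E_\beta,JE_\sigma)$. Its real part is still $(\beta,\alpha_j)>0$, so the conclusion survives.

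\textbf{The Cartan part (genuine gap).} You flag $\alpha_l(iIH_{\alpha_j})\neq 0$ as the main obstacle, but it is not one. $IH_{\alpha_j}$ is a multiple of $X_1^j\in\fraku\subset\frakb_d$, and $\frakb_d$ centralizes every $\fraks_{\alpha_k}$. Hence every $\alpha_k$ vanishes on $\fraku$, whatever basis $X_1^1,\dots,X_1^m$ was chosen. So $\alpha_l(H_j)=0$ for $l\neq j$, while $\alpha_l(H_l)=(\alpha_l,\alpha_l)$. With this, your own equation for $(H_j,H_l,E_{\alpha_l})$ immediately gives $g(E_{\alpha_l},JH_j)=0$, i.e.\ $g(E_{\alpha_l},E_{-\alpha_j})=0$, and then $g(H_j,\bar H_l)=0$ via $J$. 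This is exactly the paper's first step, with $j$ and $l$ exchanged.

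Your proposed workaround could not have succeeded. The other term in that equation contains $g(E_{\alpha_l},JH_l)$, which is a nonzero multiple of $g_l$. So if $\alpha_l(H_j)$ were nonzero, the equation would force $g(E_{\alpha_l},JH_j)\neq 0$, and the lemma would be false. No combination with the $(H_l,H_j,E_{\alpha_j})$ equation, with $g(E_\beta,\bar H_l)=0$ (which does not even enter this equation), or with a $2\times 2$ system can change that. The missing idea is precisely the vanishing of all the roots $\alpha_k$ on $\fraku$.
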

\begin{proof}
We first use \eqref{E:dOmega} for $(X,Y,Z) = (H_j, H_l, E_{\alpha_j})$, obtaining
\[
	- \frac{1}{\overline{k}_l} \alpha_j(H_j) g(E_{\alpha_j}, E_{-\alpha_l})
	+ \frac{1}{\overline{k}_j} \alpha_j(H_l) g(E_{\alpha_j}, E_{-\alpha_j}) = 0.
\]
Now $\alpha_j(H_j) = (\alpha_j, \alpha_j) > 0$, while $\alpha_j(H_l) = 0$, hence we are left with $g(E_{\alpha_j}, E_{-\alpha_l}) = 0$. It follows from this that $g(H_j, \bar{H}_l) = 0$.

Let $\beta \in R_j^+$. Relation~\eqref{E:dOmega} with $(X,Y,Z) = (H_j, E_{\beta}, E_{\alpha_l})$ yields $(\beta + \alpha_l)(H_j) g(E_{\beta}, \bar{H}_l) = 0$. Since $(\beta, \alpha_j) > 0$, $(\beta + \alpha_l)(H_j)$ is not vanishing, hence $g(E_{\beta}, \bar{H}_l) = 0$. 
It is easy to obtain from this also that $g(E_{\beta}, E_{-\alpha_l}) = 0$.

Finally, let $\beta \in R_j^+$ and $\sigma\in R_l^+$ with $\sigma\ne\alpha_l$. Set $\gamma = \alpha_l - \sigma \in R_l^+$. Without loss of generality we can assume $l > j$.
If $\beta + \gamma$ is a root, it belongs to $R_j^+$, so that $g(E_{\beta+\gamma}, E_{-\alpha_l}) = 0 $ by the previous step. Therefore, relation~\eqref{E:dOmega} with $(X,Y,Z) = (H_j, E_{\beta}, E_{\gamma})$ reduces to
\[
	(\alpha_l + \beta - \sigma)(H_j) g(E_{\beta}, E_{-\sigma}) = 0.
\]
If $g(E_{\beta}, E_{-\sigma}) \ne 0$, then $(\beta - \gamma)(H_j) = (\alpha_l + \beta - \sigma)(H_j) =  0$. Considering the real part, we have $(\beta, \alpha_j) - (\gamma, \alpha_j) = 0$. But $(\gamma, \alpha_j) = 0$, since we are assuming $l > j$, thus $(\beta, \alpha_j) = 0$, which is false. We conclude that $g(E_{\beta}, E_{-\sigma}) = 0$.
\end{proof}

We note that, by Lemma~\ref{L:orthogonality}, on the subspace $\frakd_j \oplus \frakf_j$, $g$ coincides with the restriction of the negative of the Killing form, scaled by the factor $g_j = g(E_{\alpha_j}, \bar{E}_{\alpha_j}) > 0$.
At this point, we can introduce the following scalar product $h$ on each $\ad(\frakl)$-module $\frakm_j$:
\begin{equation}\label{E:h}
	h|_{\frakd_j \oplus \frakf_j} = - \calB|_{\frakd_j \oplus \frakf_j}, \quad
	h( X_1^j, \frakd_j \oplus \frakf_j) = 0, \quad
	h(X_1^j, X_1^j) = \frac{4}{\abs{\alpha_j}^2}.
\end{equation}
Then $h$ is extended to $\frakm$ setting the different $\frakm_j$'s to be $h$-orthogonal.


\begin{proof}[Proof of Theorem \ref{T:HKT}]
The first part follows from Lemmas~\ref{L:orthogonality} and~\ref{L:levels}, where $h$ is the scalar product on $\frakm$ defined by~\eqref{E:h}.

For the proof of the second part, we assume the stabilizer $L$ to be connected and, conversely, we show that any scalar product $g$ defined by~\eqref{E:scalarproduct} induces an invariant HKT metric on $M$.
Since $h$ coincides with $-\calB$ on $\frakd_j\oplus\frakf_j$, one can check that $g$ is $\ad(\frakl)$-invariant.
The proof that $g$ is compatible with $I$ and $J$ is straightforward considering the complexification and using relations~\eqref{E:mainEquation}.
In order to show that $g$ is HKT, there are three steps to be addressed.

First, let $\alpha\in R_j^+$, $\beta\in R_k^+$, $\gamma\in R_l^+$, with $l\ge k\ge j$.
We assume $\alpha+\beta$, $\alpha+\gamma$ and $\beta+\gamma$ are roots, the other cases being similar.
By \eqref{E:diffOmega},
\[
	\dd{\Omega}(E_{\alpha}, E_{\beta}, E_{\gamma}) =
	N_{\alpha,\beta} g(E_{\alpha+\beta }, JE_{\gamma}) + 
	N_{\gamma,\alpha} g(E_{\alpha+\gamma}, JE_{\beta}) + 
	N_{\beta,\gamma} g(E_{\beta+\gamma} , JE_{\alpha}).
\]
This can be possibly non-zero only when $l=k=j$ and $\alpha+\beta+\gamma = \alpha_j$, in which case we have
\[
\begin{split}
	\dd{\Omega}(E_{\alpha}, E_{\beta}, E_{\gamma}) 
	&	=			2k_j N_{\gamma, -\alpha_j} N_{\alpha,\beta} g(E_{\alpha+\beta} , E_{-\alpha-\beta})\\
	&	\quad 	+ 	2k_j N_{\beta, -\alpha_j} N_{\gamma,\alpha} g(E_{\alpha+\gamma}, E_{-\alpha-\gamma})\\
	&	\quad 	+ 	2k_j N_{\alpha, -\alpha_j} N_{\beta,\gamma} g(E_{\beta+\gamma} , E_{-\beta-\gamma})\\
	&	=	- 2k_jg_j \left(
		N_{\alpha+\beta, \gamma}N_{\alpha,\beta} 
		- N_{\alpha+\gamma, \beta}N_{\alpha,\gamma} 
		- N_{\alpha, \beta+\gamma}N_{\beta,\gamma}
		\right) = 0,
\end{split}
\]
where we used~\eqref{E:N}.
Second, let $\beta\in R_k^+$, $\gamma\in R_l^+$, with $l\ge k$.
If $\beta + \gamma$ is not a root, $\dd{\Omega}(H_j, E_{\beta}, E_{\gamma}) = (\beta + \gamma)(H_j) g(E_{\beta}, JE_{\gamma}) = 0$.
Otherwise, 
\[
	\dd{\Omega}(H_j, E_{\beta}, E_{\gamma}) 
	= (\beta + \gamma)(H_j) g(E_{\beta}, JE_{\gamma}) 
	- \frac{N_{\beta,\gamma}}{\overline{k}_j} g(E_{\beta+\gamma}, E_{-\alpha_j}).
\]
The only non-trivial case to check is for $l=k$ and $\beta+\gamma = \alpha_j$.
Assuming this, just like in the proof of Lemma~\ref{L:orthogonality}, one can compute
\[
\begin{aligned}
	\dd{\Omega}(H_j, E_{\beta}, E_{\gamma})
	&=	\alpha_j(H_j) 2k_j N_{\beta, \gamma} g(E_{\beta}, E_{-\beta}) 
		- \frac{N_{\beta, \gamma}}{\overline{k}_j} g(E_{\alpha_j}, E_{-\alpha_j})\\
	&=	\frac{N_{\beta, \gamma}}{\overline{k}_j} (-g_j + g_j) = 0.
\end{aligned}
\]
Finally, consider the triple $(H_j, H_l, E_{\gamma})$, with $l>j$ and $\gamma\in R_k^+$.
Using~\eqref{E:diffOmega} we find
\[
\begin{aligned}
	\dd{\Omega}(H_j, H_l, E_{\gamma}) 
	&= - \gamma(H_j) g(E_{\gamma}, JH_l) + \gamma(H_l) g(E_{\gamma}, JH_j)\\
	&= \frac{1}{\overline{k}_l} \gamma(H_j) g(E_{\gamma}, E_{-\alpha_l}) - \frac{1}{\overline{k}_j} \gamma(H_l) g(E_{\gamma}, E_{-\alpha_j}).
\end{aligned}
\]
If $\gamma$ is different from both $\alpha_j$ and $\alpha_l$, the expression vanishes.
Otherwise, for instance when $\gamma = \alpha_l$ (and $\gamma \ne \alpha_j$), we have
\[
	\dd{\Omega}(H_j, H_l, E_{\gamma}) 
	= - \frac{1}{\overline{k}_l} \alpha_l(H_j) g_l = 0,
\]
since $\alpha_l(H_{\alpha_j}) = (\alpha_j, \alpha_l) = 0$ and $\alpha_l(IH_{\alpha_j}) = 0$.\\
\end{proof}
It is known that a naturally reductive metric $g$ on $G/L$, which is also hyperhermitian with respect to $I,J$, is HKT: the Bismut connections of the Hermitian structures $(g,I)$, $(g,J)$ both coincide with the canonical connection of the homogeneous space.
This was initially observed in~\cite{OpfermannPapadopoulos} and~\cite{GrantcharovPoon}, where in particular a naturally reductive metric is constructed starting from the negative of the Killing form of $\frakg_{ss}$.  However, it has been pointed out in~\cite[Section 2.3]{GentiliEtAl} that a given hypercomplex structure may be not compatible with extensions of the Killing form.


If in the Joyce decomposition one has $\frakb_d = \frakz(\frakg)$, by construction $X_1^1, \dots, X_1^m \in \frakz(\frakg)$. 
One can extend $h|_{\mathfrak{u} \times \mathfrak{u}}$ to a scalar product on $\frakz(\frakg) = \frakb_d = \frakv\oplus\fraku$, say $\tilde{h}_{\frakz}$, in such a way that $\mathfrak{u}$ and $\mathfrak{v}$ are orthogonal. 
Then $\tilde{h} = \tilde{h}_{\frakz} + (-\calB)|_{\frakg_{ss}}$ is an $\Ad(G)$-invariant scalar product on $\frakg$ for which $\frakl$ and $\frakm$ are orthogonal, and $h = \tilde{h}|_{\frakm\times\frakm}$. Consequently $h$ is naturally reductive in this case.

In case $L$ is trivial, then $h$ is a bi-invariant metric on $G$ provided $\frakb_d = \frakz(\frakg)$.
As noted in \cite[Sec. 5]{OpfermannPapadopoulos}, if all the simple factors of $\frakg$ are of type $\operatorname{B_{\ell}, C_{\ell}, D_{2\ell}, E_7, E_8, F_4, G_2}$, then $\frakb_d = \frakz(\frakg)$.

\begin{remark}\label{R:flag}
Let $(G/L, I, J, g)$ be a Joyce hypercomplex manifold, $L$ connected, with an invariant HKT metric $g$. 
Consider the Hermitian manifold $(G/L, I, g)$ and the fibration over the generalized flag manifold $(G/C, \mathcal{I})$ mentioned in section~\ref{SS:homogeneous}.
By the first part of Theorem~\ref{T:HKT}, the restriction $g|_{\frakn\times\frakn}$ is $\Ad(C)$-invariant, hence $g$ is projectable to a Hermitian metric $\tilde{g}$ on $(G/C, \mathcal{I})$.
We note that $\tilde{g}$ is never Kähler. Indeed, for any index $1\leq j \leq m$, take $\alpha \in R_j^{+}$, $\alpha \ne \alpha_j$, and let $\beta = \alpha_j - \alpha \in R_j^+$. 
Then $\alpha + \beta = \alpha_j$, and $\tilde{g}(E_{\alpha}, \bar{E}_{\alpha}) = \tilde{g}(E_{\beta}, \bar{E}_{\beta}) = \tilde{g}(E_{\alpha+\beta}, \bar{E}_{\alpha+\beta}) = g_j$, hence $\tilde{g}$ is not Kähler by the characterization given in~\cite{AP}.
\end{remark}

\begin{example}\label{Example}
Let us explore in more depth the case of homogeneous hypercomplex manifolds $G/L$ with $G=\SU(n)$, $n \ge 3$. 

We take the usual Cartan subalgebra of $\mathfrak{sl}(n, \CC)$ consisting of traceless diagonal matrices.
The root system is $R=\{ \vareps_j - \vareps_k \colon \text{$1 \leq j,k \leq n$,  $j \ne k$} \}$, where $\vareps_j$ extracts the $j$th diagonal entry of the matrix.
The number of layers in the ``full'' Joyce decomposition is $d = \frac{n-1}{2}$ if $n$ is odd, and $d = \frac{n}{2}$ if $n$ is even, 
and we select the strongly orthogonal roots as 
\[
	\alpha_1 = \vareps_1 - \vareps_2, 
	\quad \alpha_2 = \vareps_3 - \vareps_4, 
	\quad \dots \quad \alpha_d = \vareps_{2d-1} - \vareps_{2d}.
\]
We have
\[
	R_j^+ = \{ \alpha_j \}
			\cup \{ \vareps_{2j - 1} - \vareps_p \colon 2j+1 \leq p \leq n \}
			\cup \{ \vareps_q - \vareps_{2j} \colon 2j+1 \leq q \leq n \},
\]
for $j= 1,\dots,d$. 
The root vectors are $E_{\vareps_j - \vareps_k} = \frac{1}{\sqrt{2n}}e_{jk}$, where $e_{jk}$ are elementary matrices, and $H_{\vareps_j - \vareps_k} = \frac{1}{2n}(e_{jj} - e_{kk})$.
Now if $n$ is odd $\frakb_d$ is spanned by
\[
\begin{aligned}
	A_1 &= \diag	\left(
				i, i, - \frac{2i}{n-2}, \dots, - \frac{2i}{n-2}
				\right),\\
	&\cdots\\
	A_j &= \diag	\left(
				0, \dots, 0, i, i, - \frac{2i}{n-2j}, \dots, - \frac{2i}{n-2j}
				\right),\\
	&\cdots\\
	A_{d-1} &= \diag	
				\left(
				0,\dots, 0, i, i, -i, -i
				\right),\\
	A_d &= \diag	\left(
				0,\dots, 0, i, i, -2i
				\right),			
\end{aligned}
\]
hence it has dimension $d$.
If $n$ is even, then $\frakb_d$ is spanned by $A_1,\dots,A_{d-1}$, so its dimension is $d-1$.
In terms of matrices the corresponding Joyce decomposition of $\frakg = \frsu(n)$ when $n$ is odd is given  in~\cite[Sec. 4.3]{GentiliEtAl}. (For $n$ even it is similar but there is a copy of $\frsu(2)$ embedded in the right bottom corner.)

The subgroup $L$ can be chosen for any $n$ as
\[
	L = 	
		\left\{
			\begin{bmatrix}
			\id_{2m}		&	\\
						&\ell
			\end{bmatrix}
			\colon
			\ell \in \SU(n-2m) 
		\right\},
\]
where $m$ is such that $2 \leq 2m < n$.  
It is proved in~\cite[Sec.~5]{DimitrovTsanovII} that products of such hypercomplex manifolds exhaust all the simply connected Joyce hypercomplex manifolds.
If $L$ is not trivial, the subspace $\fraku$ appearing in~\eqref{E:decl} is spanned by $A_1,\dots, A_m$.

We can produce examples of homogenous hypercomplex manifolds with disconnected isotropy group. Indeed, let $G/L$ be one of the homogenous manifolds described above ($L$ possibly trivial),
and let
\[
	a =
	\begin{bmatrix}
	-\id_{2}	&			\\				
			&\id_{n-2}
	\end{bmatrix}
	\in \SU(n).
\]
Clearly $L' := L \cup aL$ is a closed subgroup of $G$ with two connected components.
On elementary matrices $e_{pq}$, one can check that for $p=1,2$ and $3 \leq q \leq n$
\[
	\Ad(a)e_{pq}= - e_{pq}, \qquad \Ad(a)e_{qp}= - e_{qp}.
\] 
The other elementary matrices are kept fixed.
Thus: $\Ad(a) X_1^j = X_1^j$ for every $j$, since $X_1^j$ are diagonal;
$\Ad(a)|_{\frakd_j} = \id$ for every $j$;
$\Ad(a)|_{\frakf_1} = - \id$ and $\Ad(a)|_{\frakf_j} = \id$ if $j\geq2$.
It follows that $I$ and $J$ are $\Ad(a)$-invariant, as well as every scalar product $g$ of the form~\eqref{E:scalarproduct}.
We have then invariant hypercomplex structures  and compatible HKT metrics on $G/L'$.
\end{example}

\section{Invariant HKT-Einstein metrics}\label{S:HKT-Einstein}
Consider a Joyce hypercomplex manifold $(G/L, I, J)$, with an invariant HKT metric $g$, which has form described in Theorem~\ref{T:HKT}.
We need to determine the components of the (first) Chern-Ricci form associated to $(I,g)$.
Even though this is essentially known, for completeness we outline the computation, using the linear map $\Lambda \colon \frakm \to \End(\frakm)$ associated to the Chern connection.
Note that the computation is valid for every homogeneous Hermitian manifold $(G/L, I, g)$, such that $g$ is projectable on the base of the fibration of $G/L$ over the flag manifold $G/C$.


We keep the same notations as in the previous sections.
For $X\in\frakm$, taking into account that  the vector field $X^* $ is Killing and that $\nabla^{\Ch}$ is compatible with $g$, we see that $A_{X^*}g  = 0$. It follows that each endomorphism $\Lambda(X)$ is skew-adjoint with respect to $g$.
We shall extend $\Lambda$ to a complex-linear map $\frakm^{\CC} \to \End(\frakm^{\CC})$. 
For $Z \in \frakm^{\CC}$, $\Lambda(Z)$ is skew-adjoint with respect to the $\CC$-bilinear extension of $g$.
Since $\nabla^{\Ch}\!I =0$, one has that $[\Lambda(Z), I] = 0$, hence $\Lambda(Z)(\frakm^{1,0}) \subset \frakm^{1,0}$ for every $Z\in\frakm^{\CC}$. Moreover, for every $Z \in \frakm^{1,0}$ and $W \in \frakm^{0,1}$ one has
\begin{equation}\label{E:LambdaBracket}
	\Lambda(Z)W = [Z,W]_{\frakm^{01}}, \quad
	\Lambda(W)Z = [W,Z]_{\frakm^{10}}.
\end{equation}
The proof of the last statements can be found in~\cite{Podesta}.

By the expression of the curvature of $\nabla^{\Ch}$ in~\eqref{E:tensors}, we have for every $V,W\in\frakm^{1,0}$,
\[
	\Ric_{\omega_I}(V,\bar{W})
		 = i \tr_{\frakm^{10}} ( \Rm_{\omega_I}(V,\bar{W}) )
		= 	- i \tr_{\frakm^{10}} (\Lambda([V,\bar{W}]_{\frakm^{\CC}})) 
			- i \tr_{\frakm^{10}} (\ad([V,\bar{W}]_{\frakl^{\CC}}))
\]
It is necessary to compute $\tr_{\frakm^{10}} \Lambda(Z)$ for a vector $Z\in\frakm^{\CC}$.
We first consider the trace of $\Lambda(Z_{\frakm^{01}})$.
We can write $Z_{\frakm^{01}} =  Z_{\frakt^{01}} + \sum_{\gamma\in\hat{R}^+} c_{\gamma}E_{-\gamma}$, for some complex numbers $c_{\gamma}$. 
Let $V\in \frakt^{1,0}$, then
\[
	\Lambda(Z_{\frakm^{01}})(V) 
	= [Z_{\frakm^{01}}, V]_{\frakm^{10}}
	= 0
\]
while for every $\alpha\in\hat{R}^+$
\[
	\Lambda(Z_{\frakm^{01}})(E_{\alpha}) 
		= [Z_{\frakm^{01}}, E_{\alpha}]_{\frakm^{10}}
		= 
		\alpha(Z_{\frakt^{01}})E_{\alpha}
		- c_{\alpha} \left(H_{\alpha}\right)_{\frakm^{10}}
		- \sum_{\substack{\gamma \colon
						\alpha-\gamma \in \hat{R}^+}} 
			c_{\gamma} N_{\alpha, -\gamma} E_{\alpha-\gamma}.
\]
Taking into account Lemmas~\ref{L:orthogonality} and~\ref{L:levels}, it follows that
\[
	\tr_{\frakm^{10}} \Lambda(Z_{\frakm^{01}}) = 
	\sum_{\alpha \in \hat{R}^+} \alpha( Z_{\frakt^{01}} ).
\]
Now $\tr_{\frakm^{10}} \Lambda(Z_{\frakm^{10}})$ is the opposite conjugate of $\tr_{\frakm^{10}} \Lambda(Z_{\frakm^{01}})$, 
hence
\[
\begin{aligned}
	\tr_{\frakm^{10}} \Lambda(Z)
		= 	\tr_{\frakm^{10}} \Lambda(Z_{\frakm^{01}}) + 
			\tr_{\frakm^{10}} \Lambda(Z_{\frakm^{10}}) 
		= \sum_{\alpha\in\hat{R}^+} \alpha( Z_{\frakt^{01}} ) - \overline{\alpha( Z_{\frakt^{01}} )}
			= \sum_{\alpha\in\hat{R}^+} \alpha( Z_{\frakt^{\CC}} ),
\end{aligned}
\]
where we used that $\overline{\alpha} = - \alpha$ for every root $\alpha$.
In a similar way, given $U\in \frakl^{\CC}$,  one can obtain
\[
	\tr_{\frakm^{10}} \ad( U ) 
		= \sum_{\alpha\in\hat{R}^+} \alpha( U_{\frakt_{\frakl}^{\CC}} ).
\]
We conclude that $\Ric_{\omega_I} (V, \bar{W}) = 0$ if $V,W\in\frakt^{1,0}$ and on $\frakn^{1,0}$ the only possibly non-zero components of the form are for $V = W = E_{\gamma}$, $\gamma\in\hat{R}^+$, namely
\[
\begin{aligned}
	\Ric_{\omega_I}(E_{\gamma}, \bar{E}_{\gamma})
	&=	i \tr_{\frakm^{10}} \Lambda( (H_{\gamma})_{\frakm^{\CC}} ) 
		+ \tr_{\frakm^{10}} \ad( (H_{\gamma})_{\frakl^{\CC}} )\\
	&=	i\sum_{\alpha\in\hat{R}^+} 	\alpha( (H_{\gamma})_{\frakt^{\CC}} ) +
									\alpha( (H_{\gamma})_{\frakt_{\frakl}^{\CC}} )\\
	&=	i\sum_{\alpha\in\hat{R}^+} 	\alpha( H_{\gamma} )
	=	2 i \gamma(H_{\hat{\delta}}),
\end{aligned}
\]
where
$\hat{\delta} := \frac{1}{2} \sum_{\alpha\in\hat{R}^+} \alpha$ and
$H_{\hat{\delta}} := \frac{1}{2} \sum_{\alpha\in\hat{R}^+} H_{\alpha}$.\\

We can now proceed with the proof of Theorem~\ref{T:HKT-E}.

\begin{proof}[Proof of Theorem~\ref{T:HKT-E}]
Let $(G/L, I, J)$ be a Joyce hypercomplex manifold endowed with an invariant HKT metric $g$.
We can now focus on the $J$-anti-invariant part of $\Ric_{\omega_I}$.

Consider first $H_j = H_{\alpha_j} - i I H_{\alpha_j}$. By \eqref{E:mainEquation}, $J\bar{H}_j = - \frac{1}{k_j}E_{\alpha_j}$, so
\begin{equation}\label{E:comparison1}
\begin{gathered}
\frac{\Ric_{\omega_I} - J\Ric_{\omega_I}}{2} 
	(H_j, \bar{H}_j) =
	\frac{1}{2\abs{k_j}^2} \Ric_{\omega_I}(E_{\alpha_j}, \bar{E}_{\alpha_j}) =
	\frac{i}{\abs{k_j}^2} \alpha_j ( H_{\hat{\delta}} )\\
\omega_I(H_j, \bar{H}_j) = \frac{i}{\abs{k_j}^2}g_j.
\end{gathered}
\end{equation}
%
%
%
Now let $\alpha_j$ be one of the strongly orthogonal roots chosen for the Joyce decomposition. Since $JE_{\alpha_j} \in \frakt^{0,1}$, we have $J\Ric_{\omega_I}(E_{\alpha_j}, \bar{E}_{\alpha_j}) = 0$. Therefore
\begin{equation}
\begin{gathered}
\frac{\Ric_{\omega_I} - J\Ric_{\omega_I}}{2} ( E_{\alpha_j}, \bar{E}_{\alpha_j} ) =
	i \alpha_j ( H_{\hat{\delta}} )\\
\omega_I(E_{\alpha_j}, \bar{E}_{\alpha_j}) = i g_j.
\end{gathered}
\end{equation}
%
%
%
Next we take $\gamma\in R_j^+$, $\gamma\ne\alpha_j$, for some $j\in\{1,\dots,m\}$. Then
\[
J\Ric_{\omega_I}(E_{\gamma}, \bar{E}_{\gamma}) = 4\abs{k_j}^2  (N_{\gamma,-\alpha_j})^2 2i(\gamma - \alpha_j) ( H_{\hat{\delta}} ),
\]
Since $(N_{\gamma,-\alpha_j})^2 = (\gamma, \alpha_j) = \frac{\abs{\alpha_j}^2}{2}$ (see Section~\ref{SS:homogeneous}), we have $4\abs{k_j}^2  (N_{\gamma,-\alpha_j})^2 = 1$,
so that $J\Ric_{\omega_I}(E_{\gamma}, \bar{E}_{\gamma}) = 2i(\gamma - \alpha_j) ( H_{\hat{\delta}} )$.
Therefore
\begin{equation}
\begin{gathered}
\frac{\Ric_{\omega_I} - J\Ric_{\omega_I}}{2} ( E_{\gamma}, \bar{E}_{\gamma} ) =
	i \alpha_j ( H_{\hat{\delta}} )\\
\omega_I(E_{\gamma}, \bar{E}_{\gamma}) = i g_j.
\end{gathered}
\end{equation}
%
%
%
Let now $\gamma, \sigma \in \hat{R}^+$ with $\gamma\ne\sigma$. We see that either $JE_{\gamma}$ is an element of $\frakt^{0,1}$ or it is a scalar multiple of $E_{\gamma - \alpha_j}$ for some $j$, and the same holds for $\sigma$. Thus
\begin{equation}
	\frac{\Ric_{\omega_I} - J\Ric_{\omega_I}}{2}(E_{\gamma}, \bar{E}_{\sigma}) = 0, \quad
	\omega_I(E_{\gamma}, \bar{E}_{\sigma}) = 0.
\end{equation}
%
%
%
With similar arguments, we get for $\gamma\in\hat{R}^+$ and $j\in\{1,\dots,m\}$
\begin{equation}
	\frac{\Ric_{\omega_I} - J\Ric_{\omega_I}}{2}(E_{\gamma}, \bar{H}_j) = 0, \quad
	\omega_I(E_{\gamma}, \bar{H}_j) = 0,
\end{equation}
and for every $j\ne l$
\begin{equation}\label{E:comparison2}
\frac{\Ric_{\omega_I} - J\Ric_{\omega_I}}{2} 
	(H_j, \bar{H}_l) = 0, \quad
	\omega_I(H_j, \bar{H}_l) = 0.
\end{equation}

Now, for every $j=1,\dots, m$ the constants
\[
	\alpha_j ( H_{\hat{\delta}} )
\]
are actually positive, see e.g. \cite[pag. 628]{PavingTheWay} . 
We conclude that $g$ is HKT-Einstein if and only if $g_j = \alpha_j ( H_{\hat{\delta}} )$ for every $j$, up to global scaling.
If moreover $L$ is connected, we can define an invariant HKT-Einstein metric on $G/L$ in this way.
\end{proof}

\begin{corollary}
Every simply connected Joyce hypercomplex manifold $(G/L, I, J)$ admits a unique (up to scaling), $G$-invariant HKT-Einstein metric.
\end{corollary}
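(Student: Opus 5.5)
The plan is to reduce the corollary to Theorem~\ref{T:HKT-E}. The one extra ingredient is that, in the simply connected case, the isotropy subgroup $L$ is connected. The proof of Theorem~\ref{T:HKT-E} has two halves. The uniqueness half comes from the first part of Theorem~\ref{T:HKT} and the computations \eqref{E:comparison1}--\eqref{E:comparison2}, and does not use connectedness of $L$. The existence half needs the converse part of Theorem~\ref{T:HKT}, since $\ad(\frakl)$-invariance must be promoted to $\Ad(L)$-invariance, and that requires $L$ connected. So the main task is to show that $L$ is connected whenever $M=G/L$ is simply connected.

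\textbf{Connectedness of $L$.} Start from the fibration $L\to G\to G/L$, with $G$ compact and connected. Its long exact homotopy sequence ends with
\[
\pi_1(G)\to\pi_1(G/L)\to\pi_0(L)\to\pi_0(G)=0 .
\]
Hence $\pi_1(M)=0$ forces $\pi_0(L)=0$, that is, $L$ is connected. Here the action is effective and $L$ is the full stabilizer, so $M=G/L$ exactly and this sequence applies directly.

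\textbf{Existence and uniqueness.} With $L$ connected, the subspaces $\frakm_j$ are $\Ad(L)$-submodules, and $\ad(\frakl)$-invariant tensors on $\frakm$ are $\Ad(L)$-invariant. Therefore the scalar product $g=\sum_j g_j\, h|_{\frakm_j\times\frakm_j}$ with $g_j=\alpha_j(H_{\hat\delta})>0$ defines a $G$-invariant HKT metric by the converse part of Theorem~\ref{T:HKT}. By the computation in the proof of Theorem~\ref{T:HKT-E}, this metric is HKT-Einstein. Conversely, any invariant HKT-Einstein metric has the form \eqref{E:scalarproduct}. Comparing the $J$-anti-invariant part of the Chern--Ricci form with $\lambda\omega_I$ then gives $\lambda g_j=\alpha_j(H_{\hat\delta})$ for every $j$.

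It remains to exclude $\lambda=0$, which the comparison itself already does. The numbers $\alpha_j(H_{\hat\delta})$ are positive, so $\lambda=0$ would force them all to vanish, a contradiction. Hence $\lambda>0$, and the metric is determined up to the overall scale $1/\lambda$. This gives existence and uniqueness up to scaling.

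The only genuinely new step is the connectedness of $L$, and it is routine once the homotopy sequence is written down. The one point to check is that the paper's standing hypotheses (effective action, $G$ compact and connected) make that sequence applicable.
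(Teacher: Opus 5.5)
Your proposal is correct and follows the paper's route: the corollary is Theorem~\ref{T:HKT-E} together with the fact that $G$ connected and $\pi_1(G/L)=0$ force $L$ connected, via $\pi_1(G/L)\to\pi_0(L)\to\pi_0(G)=0$. That connectedness is exactly what the paper's proof of Theorem~\ref{T:HKT-E} requires at its end to actually define the invariant HKT-Einstein metric. Your handling of the constant through $\lambda g_j=\alpha_j(H_{\hat\delta})$ and the positivity of $\alpha_j(H_{\hat\delta})$ is the same comparison the paper makes using \eqref{E:comparison1}.
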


\begin{remark}\label{R:comparison}
We have
\[
	\alpha_j(H_{\hat{\delta}}) = 
	\frac{1}{2}  \sum_{\gamma\in R_j^+} (\alpha_j, \gamma) +
	\frac{1}{2}  \sum_{k=1}^{j-1} \sum_{\gamma\in R_k^+} (\alpha_j, \gamma).
\]
The second sum vanishes, since for every $k=1,\dots,j-1$ and $\gamma\in R_k^+$ we can consider $\alpha_k - \gamma$, which is easily seen to be a root in $R_k^+$, and of course $(\alpha_j, \gamma) + (\alpha_j, \alpha_k - \gamma) = (\alpha_j, \gamma) + (\alpha_j, - \gamma) = 0$.
The first sum can be handled as follows. As noted in Section~\ref{SS:Joyce}, $2\frac{(\gamma, \alpha_j)}{(\alpha_j, \alpha_j)} = 1$ for every $\gamma\in R_j^+$,
hence
\[
\begin{aligned}
	\frac{1}{2} \sum_{\gamma\in R_j^+} (\alpha_j, \gamma) 
	&= \frac{(\alpha_j, \alpha_j)}{4}
		\left( 2 + \sum_{\gamma\in R_j^+} 2\frac{(\alpha_j, \gamma)}{(\alpha_j, \alpha_j)} \right)
	=\frac{(\alpha_j, \alpha_j)}{4} 
		\left( 2 + \dim_{\CC}\frakf_j \right)\\
	&=\frac{(\alpha_j, \alpha_j)}{4}
		\left( 2 + 2\dim_{\HH}\frakf_j \right)
\end{aligned}
\]
In the case of a Lie group (i.e., $L$ trivial), an invariant HKT-Einstein metric was already found in \cite{FusiGentili} by suitably modifying the negative of the Killing form.
This computation shows that our result agrees with the metric given there.
\end{remark}

\section{Properties of the Bismut connection}\label{S:Bismut}
Let $(G/L, I, g)$ be a homogeneous Hermitian manifold, where $G$ is a compact Lie group and $L$ is a closed connected subgroup. We assume that $g$ is projectable to the flag manifold $G/C$ (see Section~\ref{SS:homogeneous} for the notations).
Let us denote by $\nabla^{\Bis}$ the Bismut connection, and by $T^{\Bis}$ and $R^{\Bis}$ the torsion and curvature tensors; $\Lambda^{\Bis} \colon \frakm \to \End(\frakm)$ will denote the operator associated to $\nabla^{\Bis}$.

With the same arguments used in \cite[Sections 4, 5]{PodestaZheng}, one can prove that the only possibly non-zero values of $\Lambda^{\Bis}$ are:
\begin{equation}\label{E:LambdaBismut1}
	\Lambda^{\Bis}(E_{\alpha}) E_{\beta} = \frac{1}{2} N_{\alpha, \beta} 
		\left( 	1 + \epsilon_{\alpha}\epsilon_{\beta} 
				+ ( 1 - \epsilon_{\alpha}\epsilon_{\alpha + \beta}) \frac{g_{\beta}}{g_{\alpha+\beta}}
				- ( 1 + \epsilon_{\beta}\epsilon_{\alpha + \beta}) \frac{g_{\alpha}}{g_{\alpha+\beta}}
		\right) E_{\alpha + \beta},
\end{equation}
for every $\alpha, \beta \in \hat{R}$ such that $\alpha+\beta\in\hat{R}$, and
\begin{equation}\label{E:LambdaBismut2}
	\Lambda^{\Bis}(H) E_{\alpha} = 	\left( 	\frac{g(H, (H_{\alpha})_{\frakm^{\CC}} )}{g_{\alpha}} 
								+ \alpha(H)
								\right) E_{\alpha},
\end{equation}
for every $\alpha \in \hat{R}$ and $H\in \frakt^{\CC}$.
Here $\epsilon_{\alpha} = \pm 1$ if $\alpha\in\hat{R}^{\pm}$.

Furthermore, the following Lemma is still valid.
\begin{lemma}[\cite{PodestaZheng}, Lemma 4.3]\label{L:Bismut}
Let $\alpha,\beta\in\hat{R}^+$ be such that $\alpha + \beta \in \hat{R}^+$.
Assume that
\[
	\nabla^{\Bis}_{E_{-\alpha}} T^{\Bis} (E_{\alpha}, E_{\beta}) = 0,
	\qquad
	\nabla^{\Bis}_{E_{-\beta}} T^{\Bis} (E_{\alpha}, E_{\beta}) = 0.
\]
We have:
\begin{enumerate}
\item
if $\alpha - \beta \notin \hat{R}$, then	either $g_{\alpha+\beta} = g_{\alpha} + g_{\beta}$ or
$g_{\alpha+\beta} = g_{\alpha} = g_{\beta}$;
\item
if $\alpha - \beta \in \hat{R}$, then	either $g_{\alpha+\beta} = g_{\alpha} + g_{\beta}$, or
$g_{\alpha+\beta} = g_{\alpha}$, or
$g_{\alpha+\beta} = g_{\beta}$;
\end{enumerate}
\end{lemma}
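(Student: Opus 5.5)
The plan is to compute the two components $\nabla^{\Bis}_{E_{-\alpha}}T^{\Bis}(E_\alpha,E_\beta)$ and $\nabla^{\Bis}_{E_{-\beta}}T^{\Bis}(E_\alpha,E_\beta)$ explicitly from the operator $\Lambda^{\Bis}$, following \cite{PodestaZheng}. The vanishing of the projections onto $E_{\alpha+\beta}$ then turns into polynomial identities among $g_\alpha,g_\beta,g_{\alpha+\beta}$. Since $\nabla^{\Bis}$ is invariant, the covariant derivative of an invariant tensor at $p$ is given by
\[
(\nabla^{\Bis}_X T^{\Bis})(Y,Z)=\Lambda^{\Bis}(X)T^{\Bis}(Y,Z)-T^{\Bis}(\Lambda^{\Bis}(X)Y,Z)-T^{\Bis}(Y,\Lambda^{\Bis}(X)Z),
\]
with the torsion computed from \eqref{E:tensors} as $T^{\Bis}(Y,Z)=\Lambda^{\Bis}(Y)Z-\Lambda^{\Bis}(Z)Y-[Y,Z]_{\frakm}$. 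Every term in this formula is either a root vector or lies in $\frakt^{\CC}$. So the whole computation uses only \eqref{E:LambdaBismut1}, \eqref{E:LambdaBismut2} and the relations \eqref{E:N}.

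First, for $\alpha,\beta\in\hat R^+$ I compute $T^{\Bis}(E_\alpha,E_\beta)=c\,N_{\alpha,\beta}E_{\alpha+\beta}$. Here all signs $\epsilon$ equal $+1$, so \eqref{E:LambdaBismut1} simplifies to $c=1-\frac{g_\alpha+g_\beta}{g_{\alpha+\beta}}$ up to a fixed factor. Next I apply $\Lambda^{\Bis}(E_{-\alpha})$ to each term.

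The term $\Lambda^{\Bis}(E_{-\alpha})E_{\alpha+\beta}$ is a multiple of $E_\beta$. The term $\Lambda^{\Bis}(E_{-\alpha})E_\alpha$ lies in $\frakt^{\CC}$ (or vanishes, by the stated list of non-zero values). The term $\Lambda^{\Bis}(E_{-\alpha})E_\beta$ is a multiple of $E_{\beta-\alpha}$ and is nonzero only if $\beta-\alpha\in\hat R$. This is exactly where the case split of the lemma comes from. I also have to account for $T^{\Bis}$ evaluated on an $\frakt$-vector and a root vector, which \eqref{E:LambdaBismut2} provides.

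Collecting everything, the $E_\beta$-component of $\nabla^{\Bis}_{E_{-\alpha}}T^{\Bis}(E_\alpha,E_\beta)$ is $N_{\alpha,\beta}N_{-\alpha,\alpha+\beta}$ times a rational expression in the $g$'s. Using $N_{-\alpha,\alpha+\beta}=N_{\alpha,\beta}$ (up to sign by \eqref{E:N}) and clearing denominators, I expect this expression to factor as $(g_{\alpha+\beta}-g_\alpha-g_\beta)$ times a factor involving $(g_{\alpha+\beta}-g_\beta)$. The symmetric computation with $E_{-\beta}$ should give $(g_{\alpha+\beta}-g_\alpha-g_\beta)$ times a factor involving $(g_{\alpha+\beta}-g_\alpha)$.

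In case (1), where $\alpha-\beta\notin\hat R$, the extra $E_{\beta-\alpha}$ terms vanish. Unless $g_{\alpha+\beta}=g_\alpha+g_\beta$, both second factors must then vanish, which gives $g_{\alpha+\beta}=g_\alpha=g_\beta$. In case (2) one of the two extra terms survives. In the direction where it survives, it should cancel the constraint coming from that side, so that only one of the two equalities $g_{\alpha+\beta}=g_\alpha$ or $g_{\alpha+\beta}=g_\beta$ is forced. This yields the three alternatives.

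The main obstacle is the careful bookkeeping of the $\frakt^{\CC}$-valued terms. These are the $[E_\alpha,E_{-\alpha}]_{\frakm}$ projections, which carry factors $g(H,(H_\alpha)_{\frakm^{\CC}})/g_\alpha$. In the case $\alpha-\beta\in\hat R$ there are also the signs of the $N$'s. I would handle the $\frakt$-terms first, using the projectability assumption: this makes $\frakt$ orthogonal to $\frakn$ and keeps \eqref{E:LambdaBismut2} diagonal, and the $\frakt$-contributions should then reduce to $\beta(H_\alpha)$-type scalars that cancel in pairs.
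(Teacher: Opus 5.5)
Your route is the right one: expand $(\nabla^{\Bis}_XT^{\Bis})(Y,Z)$ through $\Lambda^{\Bis}$ and read off coefficients from \eqref{E:LambdaBismut1}. The paper does not reprove the lemma; it imports it from Podest\`a--Zheng, where it comes from this same computation. Your case (1) argument works once the factorization is actually computed, so compute it, because you attached the factors to the wrong directions. Since $\Lambda^{\Bis}(E_{-\alpha})E_\alpha$ is not in the list, it is $0$. Also \eqref{E:LambdaBismut1} with $\epsilon_{-\alpha}=-1$ and $\epsilon_\beta=\epsilon_{\alpha+\beta}=1$ gives $\Lambda^{\Bis}(E_{-\alpha})E_{\alpha+\beta}=N_{\alpha,\beta}\frac{g_{\alpha+\beta}-g_\alpha}{g_\beta}E_\beta$. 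Hence
\[
\nabla^{\Bis}_{E_{-\alpha}}T^{\Bis}(E_\alpha,E_\beta)=N_{\alpha,\beta}^2\,\frac{(g_{\alpha+\beta}-g_\alpha-g_\beta)(g_{\alpha+\beta}-g_\alpha)}{g_{\alpha+\beta}\,g_\beta}\,E_\beta-T^{\Bis}\big(E_\alpha,\Lambda^{\Bis}(E_{-\alpha})E_\beta\big).
\]
So the $E_{-\alpha}$-direction carries $g_{\alpha+\beta}-g_\alpha$, not $g_{\alpha+\beta}-g_\beta$; by symmetry this slip is harmless. No $\frakt^{\CC}$-valued term ever enters, so \eqref{E:LambdaBismut2} is not needed and the ``main obstacle'' you anticipate does not arise.

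The genuine gap is in case (2). You assert that only one of the extra terms $\Lambda^{\Bis}(E_{-\alpha})E_\beta$ and $\Lambda^{\Bis}(E_{-\beta})E_\alpha$ survives. The only vanishing mechanism you give, that the difference root must lie in $\hat R$, cannot produce this. Since $\hat R=-\hat R$, we have $\alpha-\beta\in\hat R$ if and only if $\beta-\alpha\in\hat R$, so both terms pass that test. If both survived, neither equation would reduce to a product and your argument would not apply.

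The missing ingredient is the type condition. From $\nabla^{\Bis}I=0$ one gets $[\Lambda^{\Bis}(X),I]=0$, so $\Lambda^{\Bis}(X)$ preserves $\frakm^{1,0}$. Concretely, if $\alpha-\beta\in\hat R^+$, then $\epsilon_{-\alpha}=-1$, $\epsilon_\beta=1$, $\epsilon_{\beta-\alpha}=-1$, and all three coefficients in \eqref{E:LambdaBismut1} vanish, so $\Lambda^{\Bis}(E_{-\alpha})E_\beta=0$. The $E_{-\alpha}$-equation is then exactly the product displayed above. It yields $g_{\alpha+\beta}=g_\alpha+g_\beta$ or $g_{\alpha+\beta}=g_\alpha$. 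The $E_{-\beta}$-equation acquires a term involving $g_{\alpha-\beta}$ and is simply not used; nothing ``cancels'', contrary to what you suggest. The case $\beta-\alpha\in\hat R^+$ is symmetric and gives $g_{\alpha+\beta}=g_\beta$ as the alternative. With this supplied, your proof is complete.
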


With this preliminaries at hand, we now consider again a Joyce hypercomplex manifold $(G/L, I, J)$, endowed with a compatible invariant HKT metric $g$.
We start assuming $T^{\Bis}$ is parallel with respect to $\nabla^{\Bis}$.
We have seen that $\frakg$ admits a decomposition as in~\eqref{E:decl},\eqref{E:decl}, based on the choice of strongly orthogonal roots $\{\alpha_1, \dots, \alpha_d \}$.
By Theorem~\ref{T:HKT}, we can write
\begin{equation*}
	g = \sum_{j=1}^m g_j h|_{\frakm_j\times\frakm_j}.
\end{equation*}
It is immediate to verify the following statement.
\begin{lemma}\label{L:rootsum}
Let $\alpha\in R_j^+$, $\beta\in R_l^+$ for some $j,l \in \{1,\dots, m\}$, with $l\ge j$.
\begin{itemize}
\item
If $\alpha + \beta$ is a root, then $\alpha+\beta\in R_j^+$.
\item
If $\alpha - \beta$ is a root, then 
\[
\begin{array}{ll}
	\alpha - \beta \in R_j^+,										& \mbox{if $l > j$,}\\
	\alpha - \beta \in R_j \cup \cdots \cup R_m \cup R_{\frakl},	& \mbox{if $l = j$.}
\end{array}
\]
\end{itemize}
\end{lemma}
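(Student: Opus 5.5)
The plan is to work entirely with inner products against the strongly orthogonal roots, using the defining inequalities for $R_j^+$ and $\Theta_j$. First we record the basic facts. By construction $\gamma\in R_j^+$ means $\gamma\in\Theta_{j-1}\cap R^+$ with $2(\gamma,\alpha_j)/(\alpha_j,\alpha_j)=1$. Here we use the remark in Section~\ref{SS:Joyce} that $\gamma\in R_j^+$ if and only if this ratio equals $1$, together with $\alpha_j\in R_j^+$ itself. Moreover $\Theta_{j-1}$ consists of the roots orthogonal to $\alpha_1,\dots,\alpha_{j-1}$. The roots of $R_{\frakl}$ lie in $\Theta_m$, and the positive roots split as $R^+=R_1^+\sqcup\dots\sqcup R_d^+\sqcup(\Theta_d\cap R^+)$.

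For the sum, take $\alpha\in R_j^+$, $\beta\in R_l^+$ with $l\ge j$, and assume $\alpha+\beta\in R$. Both roots are orthogonal to $\alpha_1,\dots,\alpha_{j-1}$, so $\alpha+\beta\in\Theta_{j-1}$, and it is positive. We compute $(\alpha+\beta,\alpha_j)=\tfrac12|\alpha_j|^2+(\beta,\alpha_j)$.
\begin{itemize}
\item If $l>j$, then $(\beta,\alpha_j)=0$, so the ratio is $1$ and $\alpha+\beta\in R_j^+$.
\item If $l=j$, the ratio would be $2$. For a root in $\Theta_{j-1}$ this forces $\alpha+\beta=\alpha_j$, by the bound $-1\le 2(\gamma,\alpha_j)/(\alpha_j,\alpha_j)\le1$ for $\gamma\ne\alpha_j$. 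So again $\alpha+\beta\in R_j^+$.
\end{itemize}
The case $l=j$ with $\alpha=\alpha_j$ or $\beta=\alpha_j$ is excluded, since then the ratio would exceed $2$; alternatively, $\alpha_j$ is maximal, so $\alpha_j+\beta$ is not a root.

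For the difference, again $\alpha-\beta\in\Theta_{j-1}$.
\begin{itemize}
\item If $l>j$, then $(\alpha-\beta,\alpha_j)=\tfrac12|\alpha_j|^2$, so the ratio is $1$. To conclude $\alpha-\beta\in R_j^+$ we must also check positivity. If $\alpha-\beta$ were negative, then $\beta-\alpha$ would be a root in $\Theta_{j-1}$ with ratio $-1$. That does not immediately contradict anything, so we use the ordering instead. Since $\beta$ is positive and $\alpha-\beta$ has positive $\alpha_j$-coefficient in the sense of the inner product, $\alpha-\beta$ cannot be negative. Indeed, we use that $R_j^+$ is exactly the set of roots in $\Theta_{j-1}$ with ratio $+1$, the positive ones being those with positive pairing. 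Concretely, a root in $\Theta_{j-1}$ with $(\gamma,\alpha_j)>0$ is positive, because $\alpha_j$ is the highest root of $\Theta_{j-1}$ and $\gamma$ negative would give $(\gamma,\alpha_j)\le0$. This follows since $\alpha_j$ is dominant for the positive system of $\Theta_{j-1}$ induced by $R^+$, so $(\alpha_j,\delta)\ge0$ for all positive $\delta$.
\item If $l=j$, then $(\alpha-\beta,\alpha_j)=0$, so $\alpha-\beta\in\Theta_j$. We then only need to place $\Theta_j$ inside $R_{j+1}\cup\dots\cup R_d\cup\Theta_d$ (up to sign) and show that it does not meet $R_{m+1},\dots,R_d$ beyond $R_{\frakl}$. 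This follows because $R_{\frakl}$ contains all roots of $\Theta_m$, by the description of $\frakl$ in~\eqref{E:decl}: its semisimple part coincides with that of $\frakb_m$, whose roots are exactly $\Theta_m$. Hence $\Theta_j\subset R_{j+1}\cup\dots\cup R_m\cup\Theta_m$, and $\Theta_m=R_{\frakl}$. Note that $R_j$ itself can be dropped; it is harmless to include.
\end{itemize}

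The main point needing care is the positivity argument (dominance of $\alpha_j$ within $\Theta_{j-1}$) and the identification $\Theta_m=R_{\frakl}$. Both are standard, and I would cite them briefly.
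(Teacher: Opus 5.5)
\textbf{Gap in the difference case $l=j$.} Your strategy is the right one: pair everything against $\alpha_j$, use that $\alpha_j$ is dominant for the positive system $\Theta_{j-1}\cap R^+$, and identify $\Theta_j=R_{j+1}\sqcup\dots\sqcup R_m\sqcup\Theta_m$ with $\Theta_m=R_{\frakl}$. The paper itself only calls the statement immediate. However, your argument for the difference when $l=j$ has a genuine error.

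You assert $(\alpha-\beta,\alpha_j)=0$. This tacitly uses $2(\alpha,\alpha_j)/(\alpha_j,\alpha_j)=2(\beta,\alpha_j)/(\alpha_j,\alpha_j)=1$, which fails when $\alpha=\alpha_j$ or $\beta=\alpha_j$, where the ratio is $2$. These cases are not vacuous. For $\beta\in R_j^+$ with $\beta\neq\alpha_j$ we have $(\beta,\alpha_j)>0$, so $\beta-\alpha_j$ is a root. In fact $\alpha_j-\beta\in R_j^+$; this is the root $\sigma=\alpha_j-\gamma$ used throughout Section~\ref{S:HKT}, e.g.\ in the proof of Lemma~\ref{L:orthogonality}. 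For instance, in $\mathfrak{sl}(n,\CC)$ one has $\alpha_1-(\vareps_1-\vareps_3)=\vareps_3-\vareps_2\in R_1^+$.

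So your closing remark that ``$R_j$ itself can be dropped'' is false. $R_j$ appears in the statement precisely because of the roots $\alpha_j-\beta\in R_j^+$ and $\beta-\alpha_j\in R_j^-$.

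\textbf{How to fix it.} The repair uses the dominance argument you already have.
\begin{enumerate}
\item[(a)] If $\alpha=\alpha_j\neq\beta$, then $\alpha_j-\beta\in\Theta_{j-1}$ and $(\alpha_j-\beta,\alpha_j)=\tfrac12(\alpha_j,\alpha_j)>0$. So it is positive and lies in $R_j^+$.
\item[(b)] If $\beta=\alpha_j\neq\alpha$, the same reasoning puts $\alpha-\alpha_j$ in $R_j^-$.
\item[(c)] Only when $\alpha,\beta\neq\alpha_j$ do you land in $\Theta_j$, where your argument applies.
\end{enumerate}

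\textbf{A smaller point in the $l>j$ cases.} Your phrase ``the ratio is $1$'' also assumes $\alpha\neq\alpha_j$. It is cleaner to note that only the sign of the pairing matters. Moreover, $\alpha_j\pm\beta$ cannot be a root there, since its ratio $2$ would force $\alpha_j\pm\beta=\alpha_j$.

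The sum part needs no case distinction at all. We have $\alpha+\beta\in\Theta_{j-1}\cap R^+$ and $(\alpha+\beta,\alpha_j)=(\alpha,\alpha_j)+(\beta,\alpha_j)>0$, which is exactly the definition of $R_j^+$.
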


\begin{proposition}\label{P:BismutTorsion}
Let $(G/L, I, J)$ be a Joyce hypercomplex manifold. Then an invariant HKT metric $g$ on it has parallel Bismut torsion if and only if, with respect to the expression~\eqref{E:scalarproduct},
$g_j = g_l$ whenever $\alpha_j$, $\alpha_l$ belong to the same irreducible component of $R$.
In other words, if and only if $g|_{\frakn\times\frakn} = b|_{\frakn\times\frakn}$ for an $\Ad(G)$-invariant scalar product $b$ on $\frakg$.
\end{proposition}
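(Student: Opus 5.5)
We will prove the two implications separately. Both rely on the explicit form of $\Lambda^{\Bis}$ in \eqref{E:LambdaBismut1}--\eqref{E:LambdaBismut2}, on Lemma~\ref{L:Bismut}, and on the root combinatorics of Lemma~\ref{L:rootsum}. Throughout, $g_\gamma := g(E_\gamma,\bar E_\gamma)$. By Theorem~\ref{T:HKT} and Lemma~\ref{L:orthogonality}, $g_\gamma = g_j$ for every $\gamma\in R^j$.

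\emph{Necessity.} Assume $\nabla^{\Bis}T^{\Bis}=0$, and fix $j<l$ with $\alpha_j,\alpha_l$ in the same irreducible component $\Delta_k$ of $R$. We need to produce $\alpha\in R_j^+$ and $\beta\in R_l^+$ with $\alpha+\beta\in\hat R$. By Lemma~\ref{L:rootsum}, such a sum lies in $R_j^+$, so $g_{\alpha+\beta}=g_\alpha=g_j$ and $g_\beta=g_l$. Applying Lemma~\ref{L:Bismut} to the pair, the possible conclusions are:
\begin{itemize}
\item[(a)] $g_j=g_j+g_l$, which is impossible since $g_l>0$;
\item[(b)] $g_j=g_j=g_l$, which gives $g_j=g_l$;
\item[(c)] $g_{\alpha+\beta}=g_\alpha$, which says nothing.
\end{itemize}
Case (c) can only occur when $\alpha-\beta\in\hat R$. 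To exclude it, we will choose the pair so that $\alpha-\beta\notin R$. For example, take $\beta=\alpha_l$ and $\alpha\in R_j^+$ with $(\alpha,\alpha_l)<0$ and $\alpha+\alpha_l\in R$. Then $\alpha-\alpha_l$ is not a root by a string argument (compare Cartan integers). If instead $\alpha+\alpha_l\notin R$, we replace $\alpha$ by $\alpha_j-\alpha\in R_j^+$, which reverses the sign of $(\cdot,\alpha_l)$ because $(\alpha_j,\alpha_l)=0$.

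The remaining task, which I expect to be the main combinatorial obstacle, is the existence of such $\alpha$ in the first place. We need some $\gamma\in R_j^+$ with $(\gamma,\alpha_l)\neq 0$ whenever $\alpha_j,\alpha_l$ lie in the same simple component. I would argue by contradiction. Suppose $R_j^+\perp\alpha_l$, and also $R_{j'}^+\perp\alpha_l$ for the intermediate indices $j'$ linking $j$ to $l$ in the chain of layers inside $\Delta_k$. Then $\alpha_l$ would be orthogonal to every root not in $\Theta_{j}$ while lying in $\Theta_j$, and the irreducibility of $\Delta_k$ would be violated. If directly linking $j$ and $l$ fails, it is enough to obtain $g_j=g_{j'}$ along a chain of consecutive indices, since equality is transitive.

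\emph{Sufficiency.} Assume $g_j$ is constant on each simple factor. Then on $\frakn$ the metric equals $b|_{\frakn\times\frakn}$, with $b$ the $\Ad(G)$-invariant product given by $g_k(-\calB)$ on each simple ideal $\frakg_k$ (Remark~\ref{R:simple}). By \eqref{E:LambdaBismut1}, $\Lambda^{\Bis}(E_\alpha)E_\beta$ then has exactly the form it has for a naturally reductive metric on the flag part. We will check that the covariant derivative
\[
(\nabla^{\Bis}_X T^{\Bis})(Y,Z)=\Lambda^{\Bis}(X)T^{\Bis}(Y,Z)-T^{\Bis}(\Lambda^{\Bis}(X)Y,Z)-T^{\Bis}(Y,\Lambda^{\Bis}(X)Z)
\]
vanishes on root vectors and on $\frakt^{\CC}$. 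The required identities reduce, via \eqref{E:N} (the Jacobi-type relation), to the same cancellations as in \cite[Sections 4, 5]{PodestaZheng} for bi-invariant data. The terms involving $H\in\frakt^{\CC}$ are handled with \eqref{E:LambdaBismut2}, using $g(H,(H_\alpha)_{\frakm^{\CC}})/g_\alpha$ computed from $h$ on the $\RR X_1^j$ directions.
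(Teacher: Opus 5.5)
Your skeleton matches the paper's proof. For necessity you apply Lemma~\ref{L:Bismut}(1) to $\alpha\in R_j^+$, $\beta\in R_l^+$ with $\alpha+\beta\in R_j^+$ (Lemma~\ref{L:rootsum}) and $\alpha-\beta\notin\hat R$, which excludes $g_j=g_j+g_l$. For sufficiency, note that when $g_\alpha=g_\beta=g_{\alpha+\beta}$ the coefficient in \eqref{E:LambdaBismut1} vanishes identically. So only $\nabla^{\Bis}_H$ with $H\in\frakt^{\CC}$ needs checking, and there the cancellation is just $H_{\alpha+\beta}=H_\alpha+H_\beta$ with equal weights in \eqref{E:LambdaBismut2}; no Jacobi-type identity is needed.

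\textbf{The gap: existence of the linking root.} This is the step you flag as the main obstacle, and the paper settles it by inspecting the nine simple types. Your sketch does not close it.

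First, linking arbitrary or consecutive layers genuinely fails. In $B_n$ ($n\ge 4$) with $\alpha_1=e_1+e_2$, $\alpha_2=e_1-e_2$, $\alpha_3=e_3+e_4$, one has $R_2^+=\{\alpha_2\}$, which is orthogonal to $\alpha_3$. No $\alpha\in R_2^+$, $\beta\in R_3^+$ have $\alpha+\beta\in R$, so the chain $2\to 3$ is impossible.

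Second, your contradiction step is not valid. Orthogonality of $\alpha_l$ to $R_j^+$ and to the layers between $j$ and $l$ does not make $\alpha_l$ orthogonal to every root outside $\Theta_j$, because the layers before $j$ are also outside $\Theta_j$. In the example, $\alpha_3$ is not orthogonal to $e_1+e_3\in R_1^+$.

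Third, ``irreducibility would be violated'' is asserted without proof. A single root being orthogonal to a set of roots does not split $\Delta$.

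\textbf{Repair.} Always compare with the first layer $j_1$ of the component, as the paper does. Since $\Delta\subset\Theta_{j_1-1}$ and $\mu=\alpha_{j_1}$ is the highest (hence dominant) root of $\Delta$, the set $A=R_{j_1}^+\cup R_{j_1}^-$ is exactly the set of roots of $\Delta$ not orthogonal to $\mu$.
\begin{itemize}
\item For $\delta\in\Delta$ orthogonal to $\mu$, the reflection $s_\delta$ fixes $\mu$, so it permutes $A$.
\item For $\gamma\in A$, the reflection $s_\gamma$ preserves the span of $A$.
\end{itemize}
Hence the span of $A$ is Weyl-invariant and contains $\mu\neq 0$. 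By irreducibility it is the whole span of $\Delta$. So no $\alpha_l\in\Delta$ is orthogonal to all of $R_{j_1}^+$, and every layer of the component links directly to $j_1$. This gives a uniform replacement for the paper's case-by-case inspection.

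\textbf{The string step also fails as stated.} It is only correct when root strings have length at most $3$. In $G_2\times T^2$, take simple roots $a$ (short) and $b$ (long); then $\alpha_1=3a+2b$ and $\alpha_2=a$ lie in the same component. The root $\alpha=a+b\in R_1^+$ satisfies $(\alpha,a)<0$, yet $\alpha-a=b\in R_1^+\subset\hat R$. You therefore land in case (2) of Lemma~\ref{L:Bismut}, where $g_{\alpha+\beta}=g_\alpha$ is automatic and yields nothing.

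The fix is, given $\gamma\in R_{j_1}^+$ with $(\gamma,\alpha_l)\neq 0$, to take $\alpha=\gamma-p\,\alpha_l$, the bottom of the $\alpha_l$-string through $\gamma$.
\begin{itemize}
\item Since $(\alpha_l,\alpha_{j_1})=0$, we get $(\alpha,\alpha_{j_1})=(\gamma,\alpha_{j_1})>0$, so $\alpha\in R_{j_1}^+$ by dominance of $\alpha_{j_1}$.
\item The string is non-trivial, so $\alpha+\alpha_l\in R$.
\item By construction $\alpha-\alpha_l\notin R$.
\end{itemize}
With these two repairs your necessity argument goes through, and it no longer depends on the classification.
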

\begin{proof}
Let $\Delta$ be one of the irreducible components of $R$ such that $\Delta\cap\hat{R} \neq \varnothing$.
Suppose $\alpha_{j_1}, \dots, \alpha_{j_p}$, $1 \leq j_1 < j_2 < \dots < j_p \leq m$, are the roots among $\{ \alpha_1, \dots, \alpha_m \}$ that lie in $\Delta$.
By construction $\alpha_{j_1}$ is the maximal root of $\Delta$.
A direct inspection on the Joyce decomposition of the nine types of simple Lie algebras, using the data in \cite[arXiv version, Sec. 3.3]{DimitrovTsanovI}, ensures that for every $r\in \{2,\dots,p\}$ there exist $\alpha \in R_{j_1}^+$ with the property that $\alpha+\alpha_{j_r}$ is a root and $\alpha - \alpha_{j_r}$ is not.
By Lemma~\ref{L:rootsum}, $\alpha+\alpha_{j_r} \in R_1^+$.
Now part (1) of Lemma~\ref{L:Bismut} implies that either $g_{j_1} = g_{j_1} + g_{j_r}$ or $g_{j_1} = g_{j_r}$.
The first case clearly is not possible, so we have $g_{j_1} = g_{j_r}$.
In the end, $\nabla^{\Bis}T^{\Bis} = 0$ implies the $g$-norms of $E_{\gamma}$ are all equal, for all roots $\gamma\in\hat{R}$ which belong to the same irreducible component of $R$.

If $\frakg = \frakz(\frakg) \oplus \frakg_1 \oplus\dots\oplus \frakg_t$ is the decomposition of $\frakg$ into the sum of center and simple ideals, then one can  define an $\Ad(G)$-invariant inner product $b = b_0 + \lambda_1(-\calB_{\frakg_1}) + \dots + \lambda_1(-\calB_{\frakg_t})$, such that $g|_{\frakn\times\frakn} = b|_{\frakn\times\frakn}$. 
Here $\frakn$ is as in Section~\ref{SS:homogeneous}.\\

We shall prove now that the property just stated is also sufficient to have $\nabla^{\Bis} T^{\Bis} = 0$.
Let $\alpha, \beta \in \hat{R}$ be such that $\alpha + \beta$ is a root and it still belongs to $\hat{R}$.
Then $\alpha$ and $\beta$ are in the same irreducible component of $R$ (otherwise $\alpha + \beta$ can not be a root), so $g_{\alpha} = g_{\beta}$ by our assumption.
By~\eqref{E:LambdaBismut1}, it follows that $\Lambda^{\Bis}(E_{\alpha})E_{\beta} = 0$, thus the only possibly non-vanishing component of the Bismut connection is given by~\eqref{E:LambdaBismut2}.
There are now three cases to examine.

We show first that $\nabla^{\Bis}_{H}T^{\Bis}(E_{\alpha}, E_{\beta}) = 0$ for every $H\in \frakt^{\CC}$ and $\alpha,\beta\in\hat{R}$ with $\alpha \ne -\beta$.
The only non-trivial case is when $\alpha$ and $\beta$ are in the same irreducible component and $\alpha + \beta \in \hat{R}$. 
\[
\begin{aligned}
	&\Lambda^{\Bis}(H)T^{\Bis} (E_{\alpha}, E_{\beta}) \\
	&= 	\Lambda^{\Bis}(H) \left( T^{\Bis} (E_{\alpha}, E_{\beta}) \right) 
			-  	T^{\Bis} (\Lambda^{\Bis}(H)E_{\alpha}, E_{\beta}) 	- 	T^{\Bis} (E_{\alpha}, \Lambda^{\Bis}(H)E_{\beta}) \\
	&= - 	\Lambda^{\Bis}(H) [E_{\alpha}, E_{\beta}] 
		 -  	\left( 	
						\frac{g(H, (H_{\alpha})_{\frakm^{\CC}} )}{g_{\alpha}} + \alpha(H) + \frac{g(H, (H_{\beta})_{\frakm^{\CC}} )}{g_{\beta}} + \beta(H)
			\right) T^{\Bis}(E_{\alpha}, E_{\beta})\\
	&= 		- N_{\alpha, \beta}\left( 
									\frac{ g( H, (H_{\alpha+\beta})_{\frakm^{\CC}} ) }{g_{\alpha+\beta}} + (\alpha + \beta)(H) 
							\right) E_{\alpha+\beta}\\
	&\quad 	+ N_{\alpha, \beta}  	\left( 
										\frac{g(H, (H_{\alpha})_{\frakm^{\CC}} )}{g_{\alpha}} + \alpha(H)  + \frac{g(H, (H_{\beta})_{\frakm^{\CC}} )}{g_{\beta}} + \beta(H) 
								\right) E_{\alpha+\beta},
\end{aligned}
\]
and this expression vanishes since by assumption $g_{\alpha} = g_{\beta} = g_{\alpha+\beta}$.

Next we check that $\nabla_H^{\Bis}T^{\Bis} (E_{\alpha}, E_{-\alpha}) = 0$ for every $\alpha\in\hat{R}$ and $H\in\frakt^{\CC}$:
\[
\begin{aligned}
	&\Lambda^{\Bis}(H)T^{\Bis}(E_{\alpha}, E_{-\alpha})\\
	&=		\Lambda^{\Bis}(H) \left(T^{\Bis} \left( E_{\alpha}, E_{-\alpha} \right)\right) - T^{\Bis}( \Lambda^{\Bis}(H)E	_{\alpha}, E_{-\alpha} ) - T^{\Bis}(E	_{\alpha}, \Lambda^{\Bis}(H)E_{-\alpha} )\\
	&=		-\Lambda^{\Bis}(H)[E_{\alpha}, E_{-\alpha}]_{\frakm^{\CC}}\\
	&\quad	- \left(
					 				\frac{g(H, (H_{\alpha})_{\frakm^{\CC}} )}{g_{\alpha}} + \alpha(H)  + \frac{g(H, (H_{-\alpha})_{\frakm^{\CC}} )}{g_{\alpha}} - \alpha(H)
				\right)  [E_{\alpha}, E_{-\alpha}]_{\frakm^{\CC}} = 0.
\end{aligned}
\]
The last case is $\nabla^{\Bis}_H T^{\Bis}(V, E_{\alpha}) = 0$, for $\alpha\in\hat{R}$ and $H,V\in\frakt^{\CC}$, which is trivially verified.
\end{proof}

We have that, if the Bismut torsion $T^{\Bis}$ is Bismut-parallel, also the Bismut curvature $R^{\Bis}$ has this property. This behavior was observed in~\cite[Remark 5.2]{PodestaZheng} for compact semisimple Lie groups with a Samelson complex structure and several other cases.

\begin{proposition}\label{P:BismutCurvature}
Let $(G/L, I, J)$ be a Joyce hypercomplex manifold. If an invariant HKT metric $g$ on it has $\nabla^{\Bis}$-parallel Bismut torsion, then it has $\nabla^{\Bis}$-parallel Bismut curvature.
\end{proposition}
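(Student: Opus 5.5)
The plan is to work entirely at the level of the operator $\Lambda^{\Bis}\colon\frakm\to\End(\frakm)$ and the formulas~\eqref{E:tensors}. By Proposition~\ref{P:BismutTorsion}, parallel torsion forces $g_\alpha=g_\beta=g_{\alpha+\beta}$ whenever $\alpha,\beta,\alpha+\beta\in\hat{R}$. Hence~\eqref{E:LambdaBismut1} vanishes identically, and the only non-zero part of $\Lambda^{\Bis}$ is the diagonal action~\eqref{E:LambdaBismut2} of $\frakt^{\CC}$ on the root vectors $E_\alpha$, $\alpha\in\hat{R}$. The diagonal eigenvalue is $\mu_\alpha(H):=g(H,(H_\alpha)_{\frakm^{\CC}})/g_\alpha+\alpha(H)$, and $\Lambda^{\Bis}(H)$ acts on $\frakt^{\CC}$ by some skew endomorphism. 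I expect this action to be zero, since $\frakt\subset\frakm$ is abelian and $[\frakt,\frakt]=0$; I would confirm this from the Koszul-type formula, because every bracket involved vanishes or lands in $\frakn$.

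Next I write out $R^{\Bis}$ using~\eqref{E:tensors}:
\[
R^{\Bis}(X,Y)=[\Lambda^{\Bis}(X),\Lambda^{\Bis}(Y)]-\Lambda^{\Bis}([X,Y]_{\frakm})-\ad([X,Y]_{\frakl}).
\]
I record its values on pairs of basis vectors, using that all the $\Lambda^{\Bis}(H)$ are diagonal in the basis $\{E_\alpha\}\cup\frakt^{\CC}$ and commute with each other.

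The parallelism condition at $p$ reads
\[
(\Lambda^{\Bis}(Z)\cdot R^{\Bis})(X,Y)=0
\]
for $Z\in\frakm$, where $\Lambda^{\Bis}(Z)$ acts as a derivation on tensors. For $Z=E_\gamma$ this is automatic, because $\Lambda^{\Bis}(E_\gamma)=0$. So it suffices to take $Z=H\in\frakt^{\CC}$. Since $\Lambda^{\Bis}(H)$ is diagonal, the condition reduces to a weight statement: each component of $R^{\Bis}$ mapping $E_\alpha\otimes E_\beta\otimes E_\gamma$ to $E_\delta$ must satisfy either $\mu_\alpha+\mu_\beta+\mu_\gamma=\mu_\delta$ or vanish. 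Here $\mu_{H'}=0$ is used for entries in $\frakt$.

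The key structural fact I want is that $\mu\colon\hat{R}\cup\{0\}\to(\frakt^{\CC})^*$ is additive on the relevant triples. That is, $\mu_{\alpha+\beta}=\mu_\alpha+\mu_\beta$ whenever $g_\alpha=g_\beta=g_{\alpha+\beta}$ and $\alpha$, $\beta$ and $\alpha+\beta$ all lie in one component, and $\mu_{-\alpha}=-\mu_\alpha$. This holds because $\mu_\alpha(H)=b(H,H_\alpha)/g_\alpha+\alpha(H)$ for the $\Ad(G)$-invariant $b$ of Proposition~\ref{P:BismutTorsion}, restricted to the $\frakn$-part. The map $\alpha\mapsto H_\alpha$ is linear, and on a fixed irreducible component $g_\alpha$ is constant; on $\frakg_k$, $b$ is a multiple of $-\calB$. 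This is exactly the cancellation already used in the proof of Proposition~\ref{P:BismutTorsion}, and it shows $\Lambda^{\Bis}(H)$ acts as a derivation of the bracket of $\frakg^{\CC}$ restricted to $\hat{R}$-weights. Concretely, $\mu_\alpha(H)=c_k\,\alpha(H')+\alpha(H)$ for a fixed $H'$ depending on $H$, which is linear in $\alpha$.

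With additivity established, I check the weight condition term by term.

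\begin{itemize}
\item The term $[\Lambda^{\Bis}(X),\Lambda^{\Bis}(Y)]$ is nonzero only for $X,Y\in\frakt$, where it vanishes because diagonal operators commute.
\item The term $\Lambda^{\Bis}([X,Y]_{\frakm})$ with $X=E_\alpha$, $Y=E_{-\alpha}$ gives $\Lambda^{\Bis}((H_\alpha)_{\frakt})$. This is diagonal, of weight $0=\mu_\alpha+\mu_{-\alpha}$.
\item The term $\ad([X,Y]_{\frakl})$ has components $E_\gamma\mapsto E_{\gamma+\alpha+\beta}$ with $\alpha+\beta\in R_{\frakl}\cup\{0\}$. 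Here I need $\mu_{\gamma+\alpha+\beta}=\mu_\gamma+\mu_\alpha+\mu_\beta$, which follows from linearity in $\alpha$ together with the facts that $\alpha+\beta$ and $\gamma$ lie in one component and that roots in $R_{\frakl}$ are orthogonal to $\frakt$ under $b$ and under evaluation.
\end{itemize}

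I expect this last term to be the main obstacle. It requires that the weight of a root of $R_{\frakl}$ be zero, i.e.\ $\ad(\frakl)$ commutes with $\Lambda^{\Bis}(H)$. I would verify this via $[\frakl,\frakt]=0$ and the $\Ad(L)$-invariance of $g$, since $g$ projects to $G/C$. Mixed cases where $\alpha$ and $\beta$ lie in different components give vanishing brackets.
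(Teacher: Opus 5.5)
Your proposal is correct and is essentially the paper's argument: once Proposition~\ref{P:BismutTorsion} kills \eqref{E:LambdaBismut1}, only the diagonal operators $\Lambda^{\Bis}(H)$, $H\in\frakt^{\CC}$, survive, and each component of $\nabla^{\Bis}_H R^{\Bis}$ vanishes because $\alpha\mapsto g(H,(H_\alpha)_{\frakm})/g_\alpha+\alpha(H)$ is additive on each irreducible component where $g_\alpha$ is constant; your weight bookkeeping repackages the paper's case-by-case computation. One correction: this additivity comes from the linearity of $\alpha\mapsto (H_\alpha)_{\frakm}$, not from replacing $g$ by the $\Ad(G)$-invariant $b$, which agrees with $g$ only on $\frakn\times\frakn$ whereas here $H,(H_\alpha)_{\frakm}\in\frakt^{\CC}$ (in general $g\neq b$ on $\frakt$, e.g.\ along the $X_1^j$), and with that linearity the $R_{\frakl}$ term is no obstacle, since $\mu_{\alpha+\beta+\gamma}=\mu_\alpha+\mu_\beta+\mu_\gamma$ follows directly.
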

\begin{proof}
Under the hypothesis $\nabla^{\Bis}T^{\Bis} = 0$, the only non-zero component of $\Lambda^{\Bis}$ is given by \eqref{E:LambdaBismut2}, therefore those of $R^{\Bis}$ are
\[
\begin{gathered}
	R^{\Bis}(E_{\alpha}, E_{\beta})Z = - N_{\alpha,\beta} \ad(E_{\alpha+\beta})Z
	 \qquad \text{ if $\alpha+\beta \in R_{\frakl}$, and $Z\in\frakm^{\CC}$ }\\	
	R^{\Bis}(E_{\alpha}, E_{-\alpha}) E_{\gamma} 
		= - \Lambda^{\Bis}( (H_{\alpha})_{\frakm} ) E_{\gamma} 
			- \ad( (H_{\alpha})_{\frakl} ) E_{\gamma}
		= - 	\left(
				\frac{g( (H_{\gamma})_{\frakm}, (H_{\alpha})_{\frakm} )}{g_{\gamma}} 
				+ \gamma( H_{\alpha} )
			\right)	E_{\gamma}.
\end{gathered}			
\]

It follows that the components of $\nabla^{\Bis}R^{\Bis}$ which may be non-zero are 
$\nabla^{\Bis}_{H}R^{\Bis}(E_{\alpha}, E_{\beta}) E_{\gamma}$,
for $H\in \frakt^{\CC}$, $\alpha,\beta,\gamma \in \hat{R}$.
Let us assume first $\alpha + \beta \ne 0$. Then we have to consider only the case in which the semisimple part of $\frakl$ is non-trivial and $\alpha+\beta\in R_{\frakl}$:
\[
\begin{aligned}
	\nabla^{\Bis}_{H}R^{\Bis} (E_{\alpha}, E_{\beta}) V
	&= \Lambda^{\Bis}(H) \left(R^{\Bis} (E_{\alpha}, E_{\beta}) V \right)\\
	&\quad 
			- R^{\Bis}( \Lambda^{\Bis}(H) E_{\alpha}, E_{\beta}) V
			- R^{\Bis}(  E_{\alpha}, \Lambda^{\Bis}(H) E_{\beta}) V\\
	&\quad 	- R^{\Bis}(E_{\alpha}, E_{\beta}) ( \Lambda^{\Bis}(H)V )  \\
	&=	 	- N_{\alpha,\beta} \Lambda^{\Bis}(H) [ E_{\alpha+\beta}, V ] \\
	&\quad	+ \left(
				\frac{g( H, (H_{\alpha})_{\frakm} )}{g_{\alpha}} + \alpha(H)
				+ \frac{g( H, (H_{\beta})_{\frakm} )}{g_{\beta}} + \beta(H)
			\right)
			N_{\alpha,\beta} [E_{\alpha+\beta}, V] \\
	&\quad 	+  N_{\alpha,\beta} [ E_{\alpha+\beta}, \Lambda^{\Bis}(H)V ].
\end{aligned}
\]			
If $V\in\frakt^{\CC}$, each summand vanishes (note $[E_{\alpha+\beta}, V]=0$ since $[\frakl, \frakt]=0$).
If $V=E_{\gamma}$ for some $\gamma$ in $\hat{R}$, then it is enough to consider
$\alpha+\beta+\gamma \in \hat{R}$, so that
\begin{multline*}
	\nabla^{\Bis}_{H}R^{\Bis} (E_{\alpha}, E_{\beta}) E_{\gamma} =
	N_{\alpha,\beta}N_{\alpha+\beta, \gamma}
	\Big(		- \frac{g(H, (H_{\alpha+\beta+\gamma})_{\frakm})}{g_{\alpha+\beta+\gamma}}
			- (\alpha+\beta+\gamma)(H) \\
			+ \frac{g(H, (H_{\alpha})_{\frakm})}{g_{\alpha}}
			+ \alpha(H)
			+ \frac{g(H, (H_{\beta})_{\frakm})}{g_{\beta}}
			+ \beta(H)
			+ \frac{g(H, (H_{\gamma})_{\frakm})}{g_{\gamma}}
			+ \gamma(H)
	\Big) E_{\alpha+\beta+\gamma}.
\end{multline*}
Since necessarily $\alpha, \beta, \gamma, \alpha+\beta+\gamma$ are in the same irreducible component,
we have $g_{\alpha+\beta+\gamma}= g_{\alpha} = g_{\beta} = g_{\gamma}$, hence the sum vanishes.

Assume instead $\alpha = -\beta$. We compute, as above, that
\begin{multline*}
	R^{\Bis}( \Lambda^{\Bis}(H) E_{\alpha}, E_{-\alpha}) E_{\gamma} +
	R^{\Bis}(  E_{\alpha}, \Lambda^{\Bis}(H) E_{-\alpha}) E_{\gamma} = \\
=	\left(
		\frac{g( H, (H_{\alpha})_{\frakm} )}{g_{\alpha}} + \alpha(H)
		+ \frac{g( H, (H_{-\alpha})_{\frakm} )}{g_{\alpha}} - \alpha(H)
	\right)
	R^{\Bis}(E_{\alpha}, E_{-\alpha}) E_{\gamma} = 0
\end{multline*}
and
\[
\begin{aligned}
	&\Lambda^{\Bis}(H)
	\left(
		R^{\Bis}(E_{\alpha}, E_{-\alpha}) E_{\gamma}
	\right) =\\ 
	&\quad=
	\left(
		\frac{g( (H_{\gamma})_{\frakm}, (H_{\alpha})_{\frakm} )}{g_{\gamma}} 
		+ \gamma( H_{\alpha} )E_{\gamma}
	\right)\Lambda^{\Bis}(H)E_{\gamma} \\
	&\quad= 
	\left(
		\frac{g( (H_{\gamma})_{\frakm}, H_{\alpha} )}{g_{\gamma}} 
		+ \gamma( (H_{\alpha})_{\frakm} )E_{\gamma}
	\right)
	\left(
		\frac{g( (H_{\gamma})_{\frakm}, H)}{g_{\gamma}} 
		+ \gamma(H) E_{\gamma}
	\right) E_{\gamma} \\
	&\quad = R^{\Bis}(E_{\alpha}, E_{-\alpha}) ( \Lambda^{\Bis}(H)E_{\gamma} ).
\end{aligned}
\]
Thus 
\[
\begin{aligned}
	\nabla^{\Bis}_{H}R^{\Bis} (E_{\alpha}, E_{-\alpha}) E_{\gamma}
	&= \Lambda^{\Bis}(H) \left(R^{\Bis} (E_{\alpha}, E_{-\alpha}) E_{\gamma} \right)\\
	&\quad 
			- R^{\Bis}( \Lambda^{\Bis}(H) E_{\alpha}, E_{-\alpha}) E_{\gamma} 
			- R^{\Bis}(  E_{\alpha}, \Lambda^{\Bis}(H) E_{-\alpha}) E_{\gamma}\\
	&\quad 	- 	R^{\Bis}(E_{\alpha}, E_{-\alpha}) ( \Lambda^{\Bis}(H)E_{\gamma} ) \\
	&= 0	.
\end{aligned}
\]
\end{proof}

\begin{remark}
Given a homogeneous Hermitian manifold $(G/L, I, g)$, were $G$ is a compact Lie group and $L$ a closed subgroup, with the same computations done in the last two Propositions one can show that if $g|_{\frakn\times\frakn} = b|_{\frakn\times\frakn}$, for an $\Ad(G)$-invariant scalar product $b$ on $\frakg$, then the torsion and the curvature of the Bismut connection are parallel.
\end{remark}

Finally we make some observations on invariant \emph{strong} HKT metrics, completing the proof of Theorem~\ref{T:Bismut}.

\begin{proposition}\label{P:SHKT}
Let $(M=G/L, I, J)$ be a Joyce hypercomplex manifold with an invariant strong HKT metric $g$. 
The following facts hold.
\begin{enumerate}
\item[(i)]
The torsion and the curvature of the Bismut connection are $\nabla^{\Bis}$-parallel.
\item[(ii)]
Assume $M = \U(1)^{\ell} \times (K_1/L_1) \times \cdots \times (K_t/L_t)$, where $K_j$ are compact, connected, simple Lie groups, and $L_j\subset K_j$ are closed, connected subgroups. Then $M$ is a Lie group and the HKT structure  $(I,J,g)$ is left-invariant.
\end{enumerate}
\end{proposition}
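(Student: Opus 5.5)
The plan for (i) is to reduce it to Propositions~\ref{P:BismutTorsion} and~\ref{P:BismutCurvature}. By those two results it suffices to show that strong HKT forces $g_j=g_l$ whenever $\alpha_j,\alpha_l$ lie in the same irreducible component $\Delta$ of $R$.

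First I write the Bismut torsion $3$-form $c=-\dd{}^{\cplx}_I\omega_I$ as an invariant form on $\frakm$. Using Theorem~\ref{T:HKT}, $g$ is diagonal in the root vectors, with $g_\gamma=g_j$ for $\gamma\in R_j^+$. Then $c$ has components
\[
c(E_\alpha,E_\beta,E_\gamma)=\pm N_{\alpha,\beta}\,(\text{linear combination of } g_\alpha,g_\beta,g_\gamma)
\]
for $\alpha+\beta+\gamma=0$, together with the components involving $\frakt$.

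Next I compute $\dd{c}$ on $\frakm^{\CC}$ with the Chevalley--Eilenberg formula for invariant forms on $G/L$. Here $\dd{c}(X_0,\dots,X_3)$ is a sum of terms $c([X_i,X_j]_{\frakm},\dots)$, since the $\frakl$-components drop out by $\ad(\frakl)$-invariance of $c$.

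As in the proof of Proposition~\ref{P:BismutTorsion}, for $r\ge2$ I choose $\alpha\in R_{j_1}^+$ with $\alpha+\alpha_{j_r}$ a root and $\alpha-\alpha_{j_r}$ not a root. I then evaluate $\dd{c}$ on the quadruple
\[
(E_\alpha,\;E_{\alpha_{j_r}},\;E_{-\alpha},\;E_{-\alpha_{j_r}}),
\]
or on a similar quadruple with one entry replaced by $H_{j_r}$. By Lemma~\ref{L:rootsum} and the vanishing of many $N$'s, only a few terms survive. I expect to obtain an expression of the form $P(g_{j_1},g_{j_r})\,(g_{j_1}-g_{j_r})$ with $P>0$. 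This forces $g_{j_1}=g_{j_r}$, and (i) follows.

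The main obstacle is bookkeeping this quadruple so that the surviving coefficient is visibly nonzero. I would first try the $\SU(n)$ model of Example~\ref{Example}, with $\alpha=\vareps_1-\vareps_3$ and $\alpha_{j_r}=\vareps_3-\vareps_4$, to fix signs. After that I would argue uniformly using the relations~\eqref{E:N}.

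For (ii) I work factor by factor, by Remark~\ref{R:simple}. By (i), on each $K_s/L_s$ the metric restricted to $\frakn$ is a multiple of $-\calB$. Hence the Bismut connection is the one described in Proposition~\ref{P:BismutCurvature}, with $\Lambda^{\Bis}$ supported on $\frakt$. In this situation $\dd{c}$ differs from its Lie group value by terms
\[
g\big([[X,Y]_{\frakl},Z],W\big),
\]
coming from brackets landing in $\frakl$.

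I then suppose the semisimple part of $\frakl_s$ is nonzero, so some $R_l$ with $l>m$ lies in $\Delta_s$. I pick $\beta,\gamma\in\hat R\cap\Delta_s$ with $\beta-\gamma\in R_{\frakl}$ (for instance $\beta,\gamma\in R_{j}^+$ with $j\le m$ in the same layer). Evaluating $\dd{c}(E_\beta,E_{-\gamma},E_\gamma,E_{-\beta})$ should then give a strictly signed sum $\sum N^2g_j^2\neq0$, which contradicts strongness. So $\frakl_s$ is abelian and contained in $\frakb_d$.

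Connectedness of $L_s$, effectiveness, and the condition $\dim\frakb_m-\dim\frakl=4$ together force $\frakl_s=0$, so $L_s$ is trivial. Hence $M$ is the Lie group $\U(1)^\ell\times K_1\times\cdots\times K_t$. The structure $(I,J,g)$ is invariant under the transitive group, which now acts by left translations, so it is left-invariant.
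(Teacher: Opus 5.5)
Part (i) has a genuine gap: the quadruple you propose carries no information. Extend $g$ $\CC$-bilinearly, so that $g(E_\gamma,E_{-\gamma})=-g_\gamma$, and take $c=\dd{\omega_I}(I\cdot,I\cdot,I\cdot)$. For $\alpha,\beta\in\hat R^+$ with $\alpha+\beta\in\hat R$ and $\alpha-\beta\notin R$ one computes
\[
\dd{c}(E_\alpha,E_\beta,E_{-\alpha},E_{-\beta})=-2g\big((H_\alpha)_{\frakm},(H_\beta)_{\frakm}\big)-2N_{\alpha,\beta}^2\,(g_{\alpha+\beta}-g_\alpha-g_\beta).
\]
Take $\alpha\in R_{j_1}^+$ and $\beta=\alpha_{j_r}$.
\begin{itemize}
\item Second term: Lemma~\ref{L:rootsum} gives $g_{\alpha+\beta}=g_\alpha=g_{j_1}$, so this term equals $2N^2_{\alpha,\alpha_{j_r}}g_{j_r}$.
\item First term: $H_{\alpha_{j_r}}$ lies in $\frakd_{j_r}^{\CC}$, where $g=-g_{j_r}\calB$, and $\frakd_{j_r}^{\CC}$ is $g$-orthogonal to $X_1^{j_r}$ and to the other layers. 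Hence this term equals $2g_{j_r}(\alpha,\alpha_{j_r})=-2g_{j_r}N^2_{\alpha,\alpha_{j_r}}$, using $\alpha-\alpha_{j_r}\notin R$.
\end{itemize}
The two terms cancel for all values of $g_{j_1},g_{j_r}$. Your $\SU(n)$ test with $\alpha=\vareps_1-\vareps_3$, $\alpha_2=\vareps_3-\vareps_4$ would simply return $0=0$.

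Inserting torus vectors does not help either. For $T\in\frakt^{\CC}$ and $a+b+c=0$ one gets $\dd{c}(T,E_a,E_b,E_c)=N_{a,b}\,g\big((H_a+H_b+H_c)_{\frakm},T\big)=0$, and quadruples with two torus entries also vanish identically.

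What is missing is a quadruple of four roots, no two opposite, all in one layer $R_j$, but whose pairwise sums fall in \emph{different} layers. The paper does this as follows.
\begin{itemize}
\item Let $\alpha_j$ be the highest root of the component of $\alpha_k$, and take $\theta,\beta\in R_j^+$ with $\theta-\beta=\alpha_k$ and $\alpha_j-(\theta+\beta)\notin R$.
\item Put $\alpha=\alpha_j-\beta$, $\gamma=-\theta$, $\nu=\theta-\alpha_j$. Then $\alpha+\beta=\alpha_j$, $\alpha+\nu=\alpha_k$ and $\alpha+\gamma\notin R$.
\item Using $N_{\alpha,\beta}N_{\gamma,\nu}+N_{\alpha,\nu}N_{\beta,\gamma}=0$, this gives $\dd{\dd{}^{\cplx}\omega_I}(E_\alpha,E_\beta,E_\gamma,E_\nu)=2N_{\alpha,\beta}N_{\gamma,\nu}(g_k-g_j)$, hence $g_j=g_k$.
\end{itemize}
For $\SU(n)$ this is the quadruple of roots $\vareps_3-\vareps_2$, $\vareps_1-\vareps_3$, $\vareps_4-\vareps_1$, $\vareps_2-\vareps_4$.

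In (ii), your way of killing the semisimple part of $\frakl_s$ can be made to work, although the result is linear in $g$, not a sum of $g_j^2$. Suppose $\beta,\gamma\in R_j^+$, $j\le m$, with $\beta-\gamma\in R_{\frakl}$.
\begin{itemize}
\item Then $H_\beta-H_\gamma\in\frakl^{\CC}$, so $(H_\beta)_{\frakm}=(H_\gamma)_{\frakm}=:Z\neq0$, and the $\beta-\gamma$ terms drop out.
\item So $\dd{c}(E_\beta,E_\gamma,E_{-\beta},E_{-\gamma})$ equals $-2g(Z,Z)$, plus $2N^2_{\beta,\gamma}g_j$ if $\beta+\gamma=\alpha_j$. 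This is strictly positive because $Z\in i\frakm$.
\end{itemize}
You still need to show such a pair exists (the $\theta,\beta$ above provide one). You must also discard components $\Delta_s\subset R_{\frakl}$, which give trivial factors.

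The last step of (ii), however, is wrong. Connectedness, effectiveness and the condition on $\dim\frakb_m-\dim\frakl$ do not force an abelian $\frakl_s\subset\frakb_d$ to vanish. That condition only fixes $\dim\fraku=m$ and leaves $\frakv=\frakb_d\cap\frakl$ free. For example, take $\U(1)\times\SU(3)/\U(1)$ with $\frakl$ spanned by $\diag(i,i,-2i)$ and $X_1^1$ central. Here $d=m=1$ and $\frakb_1=\frakz(\frakg)\oplus\frakl$. After dividing by the finite ineffective kernel, this is a Joyce hypercomplex manifold of the product form in (ii), yet $\frakl_1\neq0$.

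Strongness therefore has to be used again, as the paper does. Impose $\dd{\dd{}^{\cplx}\omega_I}(E_\alpha,E_{-\alpha},E_\beta,E_{-\beta})=0$ for all $\alpha\neq\beta$ in $\Delta_s^+$, and combine it with $g_\gamma=\lambda$ on $\Delta_s$ from (i). This shows that $Z\mapsto g(Z_{\frakm},Z_{\frakm})$ is a fixed nonzero multiple of $\calB$ on the Cartan subalgebra of $\frakk_s^{\CC}$. Since $X_{\frakm}=0$ for $X\in\frakl_s$, one gets $\calB(X,X)=0$ and hence $\frakl_s=0$.
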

\begin{proof}
As in section~\ref{SS:Joyce}, associated to the homogeneous space $G/L$ we have a decomposition given by~\eqref{E:decl} and in~\eqref{E:decm}. In particular, if $d$ denotes the number of levels in the full Joyce decomposition of $\frakg$, for an integer $1\leq m \leq d$ we have $\hat{R} = \bigcup_{j=1}^m R_j$, $R_{\frakl} = \bigcup_{j=m+1}^d R_j$.
Let us fix any index $k\in \{1,\dots, d \}$.
Let $\Delta \subset R$ be the irreducible component of $R$ containing $\alpha_k$, and let $\mu$ be the (unique) maximal root of $\Delta$. 
By construction, $\mu = \alpha_j \in \hat{R}$ for some $j \leq k$. If $j < k$,
a direct inspection ensures that 
there exist $\theta,\beta \in R_j^+$ such that
$\theta - \beta = \alpha_k \in R_k^+$, and $\alpha_j - (\theta + \beta)$ is not a root.
Then let us set
\[
	\alpha = \alpha_j - \beta \in R_j^+, \qquad
	\gamma= - \theta \in R_j^-, \qquad
	\nu= \theta - \alpha_j \in R_j^-.
\]
Clearly $\alpha + \beta + \gamma + \nu = 0$ and all pairs add up non-zero.
Moreover we have
\[
	\alpha + \beta = \alpha_j \in R_j^+, \qquad
	\alpha + \nu = -\beta + \theta = \alpha_k \in R_k^+,
\]
while $\alpha + \gamma = \alpha_j - (\beta + \theta)$ is not a root.
For example, if $\Delta$ is of type $\operatorname{A}_n$, for $n\ge 3$ (for $n=1,2$ there is only one level of decomposition), in the notations of Example~\ref{Example} we have $\mu = \vareps_1 - \vareps_2$, and for every 
$2 \leq p \leq \left[\frac{n+1}{2}\right]$ we can take
$\theta = \vareps_1 - \vareps_{2p}$, $\beta = \vareps_1 - \vareps_{2p -1}$, so that
\[
	\alpha = \vareps_{2p-1} - \vareps_{2}, , 
	\qquad
	\gamma = \vareps_{2p} - \vareps_{1}, 
	\qquad
	\nu = \vareps_{2} - \vareps_{2p}.
\]

In order to prove claim (i), we consider the case $1 \leq k \leq m$. Assume $j < k$. With computations similar to those in \cite[Lemma 3.1]{LauretMontedoro}, one obtains
\[
\begin{aligned}
	0 
	&=\dd{\dd{}^{\cplx} \omega_I} (E_{\alpha}, E_{\beta}, E_{\gamma}, E_{\nu})\\
	& = N_{\alpha,\beta} N_{\gamma, \nu}
		(g_{\alpha} + g_{\beta} + g_{\gamma} + g_{\nu} - 2 g_{\alpha+\beta} ) \\
	&\quad	+ \epsilon_{\alpha+\nu} N_{\alpha,\nu} N_{\beta, \gamma}
		(- g_{\alpha} + g_{\beta} - g_{\gamma} + g_{\nu} 
			+ 2 \epsilon_{\alpha+\nu} g_{\alpha+\nu} ) \\
	&=   N_{\alpha,\beta} N_{\gamma, \nu}
			( g_j + g_j + g_j + g_j - 2 g_j )
		 + \epsilon_{\alpha+\nu} N_{\alpha,\nu} N_{\beta, \gamma}
			(- g_j + g_j - g_j + g_j 
			+ 2 g_k ) \\
	&=	2N_{\alpha,\beta} N_{\gamma, \nu} g_j  + 2N_{\alpha,\nu} N_{\beta, \gamma} g_k\\
	&= 2N_{\alpha,\beta} N_{\gamma, \nu} ( - g_j + g_k ).
\end{aligned}
\]
Here we used the formula
$N_{\alpha,\beta} N_{\gamma, \nu} + N_{\alpha,\nu} N_{\beta, \gamma} = 0$.
It follows that $g_j = g_k$. Since $k$ was chosen in $\{ 1, \dots, m\}$, the proof of the first claim is finished applying Propositions~\ref{P:BismutTorsion} and~\ref{P:BismutCurvature}.

To prove claim (ii), we consider the case $m+1 \leq k \leq d$.
Suppose the maximal root $\alpha_j$ is in $\hat{R}$, then $j \leq m < k$.
Since $\alpha + \nu = \alpha_k \in R_{\frakl}$, this time one obtains
\[
		0 
	=\dd{\dd{}^{\cplx} \omega_I} (E_{\alpha}, E_{\beta}, E_{\gamma}, E_{\nu})\\
	= N_{\alpha,\beta} N_{\gamma, \nu}
		(g_{\alpha} + g_{\beta} + g_{\gamma} + g_{\nu} - 2 g_{\alpha+\beta} )
	= 	2 N_{\alpha,\beta} N_{\gamma, \nu} g_j,
\]
a contradiction. Thus $\alpha_j \in R_{\frakl}$, and hence $\Delta \subset R_{\frakl}$.

By the current assumption we have $\frakg = \RR^{\ell}\oplus \frakk_1\oplus\dots\oplus\frakk_{t}$, $\frakl=\frakl_1\oplus\cdots\oplus\frakl_t$, and the chosen maximal abelian subalgebra has the form $\RR^{\ell}\oplus\fraks_1\oplus\cdots\fraks_t$, each $\fraks_j$ being a maximal abelian subalgebra of $\frakk_j$.
The irreducible components of $R$ are the root systems $\Delta_j$ associated to the simple Lie algebras $\frakk_j$.
The above argument shows that, for some $q$, $\hat{R} = \Delta_1 \cup \cdots \cup \Delta_q$, $R_{\frakl} = \Delta_{q+1} \cup \cdots \cup \Delta_t$, up to renaming indexes.
Thus  $\frakk_j =\frakl_j$ for every $j\in\{q+1, \dots, t\}$, hence the coset spaces $K_{q+1}/L_{q+1}, \dots, K_t/L_t$ are trivial.
Without loss of generality, we can therefore assume $R_{\frakl}=\varnothing$ and $M=\U(1)^{\ell} \times (K_1/L_1) \times \cdots \times (K_q/L_q)$.
According to~\eqref{E:decl}, $\frakl = \frakl_1\oplus\cdots\oplus\frakl_q$ is contained in the chosen maximal abelian subalgebra of $\frakg$, so $\frakl_j\subset\fraks_j$ for each $j\in\{1,\dots,q\}$.
Using again the pluriclosed condition, for every $\alpha,\beta \in\Delta_j^+$, $\alpha\neq\beta$, one has (\cite{LauretMontedoro})
\[
\begin{aligned}
	0	&=	\dd{\dd{}^{\cplx} \omega_I} (E_{\alpha},  E_{-\alpha}, E_{\beta}, E_{-\beta})\\
		&=	2g( (H_{\alpha})_{\frakm^{\CC}}, (H_{\beta})_{\frakm^{\CC}} )
			- 2 N_{\alpha,\beta}^2 (g_{\alpha+\beta} - g_{\alpha} - g_{\beta})
			- 2 \epsilon_{\alpha-\beta} N_{\alpha,-\beta}^2 
				(\epsilon_{\alpha-\beta}g_{\alpha-\beta} - g_{\alpha} + g_{\beta}).
\end{aligned}	
\]
By part (i), there is $\lambda>0$ such that $g_{\gamma} = \lambda$ for every $\gamma\in\Delta_j$, so
\[
	g(  (H_{\alpha})_{\frakm^{\CC}}, (H_{\beta})_{\frakm^{\CC}}  ) = 
	\lambda(- N_{\alpha,\beta}^2 + N_{\alpha,-\beta}^2  ) =
	\lambda (\alpha,\beta) = \lambda \calB( H_{\alpha} , H_{\beta} ),
\]
for every $\alpha, \beta \in \Delta_j^+$, $\alpha\neq\beta$. 
By construction $g(H_{\alpha_j}, H_{\alpha_j})= \lambda\calB(H_{\alpha_j}, H_{\alpha_j})$.
For every $\beta\in\Delta_j^+$, $\beta\neq\alpha_j$, we have
$g((H_{\beta})_{\frakm^{\CC}}, (H_{\alpha_j-\beta})_{\frakm^{\CC}}) = \lambda\calB(H_{\beta}, H_{\alpha_j-\beta})$,
hence $g((H_{\beta})_{\frakm^{\CC}}, (H_{\beta})_{\frakm^{\CC}}) = \lambda\calB(H_{\beta}, H_{\beta})$.
From this, it easily follows that $\lambda\calB(Z,Z) = g( (Z)_{\frakm^{\CC}}, (Z)_{\frakm^{\CC}}  )$ for every $Z \in \fraks_j^{\CC}$.
In particular, for $X\in\frakl_j$ we have $\lambda\calB(X,X) = g( (X)_{\frakm^{\CC}}, (X)_{\frakm^{\CC}}  ) = 0$, hence $\frakl_j = 0$.
Therefore the subgroups $L_1, \dots, L_q$ are trivial, and claim (ii) now follows.

\end{proof}

Putting together part (ii) of the last Proposition, and by~\cite[Corollary 5.12]{DimitrovTsanovII}, we have the following
\begin{corollary}
If a \emph{simply connected} Joyce hypercomplex manifold $(M,I,J)$ admits an invariant strong HKT metric, then $M = \SU(2n_1+1) \times \cdots \times \SU(2n_r+1)$.
\end{corollary}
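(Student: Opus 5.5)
The plan is to combine part (ii) of Proposition~\ref{P:SHKT} with the classification of simply connected Joyce hypercomplex manifolds in \cite[Corollary 5.12]{DimitrovTsanovII}, using Example~\ref{Example} to pin down which factors can occur.

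\emph{First step: product form.} Since $M$ is simply connected and compact homogeneous, we may replace $G$ by a compact connected group acting transitively with connected isotropy. By the classification recalled in Example~\ref{Example}, $M$ is then a product of a torus factor with coset spaces of compact simple groups; simple connectivity kills the torus factor. Moreover \cite{DimitrovTsanovII} says each factor is of the form $\SU(n)/L$ with
\[
L=\left\{\begin{bmatrix}\id_{2m}&\\&\ell\end{bmatrix}\colon \ell\in\SU(n-2m)\right\},
\]
or the Lie group $\SU(n)$ itself, with $L$ connected in every case. This is exactly the shape $\U(1)^{\ell}\times (K_1/L_1)\times\cdots\times(K_t/L_t)$ required by Proposition~\ref{P:SHKT}(ii), with $\ell=0$ and each $K_j$ simple.

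\emph{Second step: reduce to Lie groups.} Apply Proposition~\ref{P:SHKT}(ii). Every $L_j$ must be trivial, so $M$ is a compact simply connected Lie group $K_1\times\cdots\times K_t$ carrying a left-invariant Joyce hypercomplex structure with $\frakb_m-\frakl$ of dimension $4$, i.e.\ $\frakm=\bigoplus_j\frakm_j$ with no $\fraku_0$.

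\emph{Third step: identify the factors.} Here we read off from \cite[Corollary 5.12]{DimitrovTsanovII}, equivalently the dimension count of Example~\ref{Example}. With $L$ trivial we have $m=d$ and $\fraku=\frakb_d$. For $\SU(n)$ the condition $\dim\frakb_d=m=d$ holds exactly when $n$ is odd; for $n$ even, $\dim\frakb_d=d-1\neq d$. Hence each factor is $\SU(2n_j+1)$.

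\emph{Main obstacle.} The delicate point is the first step: we must make sure the simply connected hypothesis really forces the product-of-simple-cosets form with connected isotropy, so that Proposition~\ref{P:SHKT}(ii) applies without a torus or a non-simple factor sneaking in. We will rely entirely on the cited classification for this, and check that the product of $\dim\frakb-\dim\frakl=4$ conditions across factors is consistent, which it is since each layer contributes one $X_1^j$.
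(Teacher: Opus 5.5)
Your proposal is correct and follows the paper's own route: the paper obtains the corollary by combining the classification \cite[Corollary 5.12]{DimitrovTsanovII} of simply connected Joyce hypercomplex manifolds (products of the spaces $\SU(n)/L$ of Example~\ref{Example}, with $L$ connected) with Proposition~\ref{P:SHKT}(ii), which forces all isotropy groups to be trivial and so leaves only the factors with $n$ odd. Your dimension count in the third step is a valid way to exclude even $n$, though it is even quicker to note that $\dim \SU(n)=n^2-1$ is odd for $n$ even. In your first step no replacement of $G$ is needed, since $L$ is automatically connected when $G$ is connected and $M$ is simply connected.
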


\begin{remark}
Thanks to~\cite[Corollary 5.1]{FinoGrantcharov} and~\cite[Proposition 4.1]{LauretMontedoro}, if the Joyce hypercomplex manifold is a compact semisimple Lie group $G$, a compatible left-invariant HKT metric is strong if and only if it is bi-invariant.

If $G$ is not semisimple, we have seen in Section~\ref{S:HKT} that, for a given invariant hypercomplex structure, the additional hypothesis $\frakz(\frakg) = \frakb_d$ implies that the HKT metric $h$ as in~\eqref{E:h} is also bi-invariant, hence strong. 
Conversely, any other invariant strong HKT metric, compatible with the same structure, must be bi-invariant by Theorem~\ref{T:HKT} and Proposition~\ref{P:SHKT}.

However, if the additional assumption is dropped, this does not happen always.
An example of a strong HKT metric which is not bi-invariant can be given on $G = \U(2n)$.
Here the Joyce decomposition is
\[
	\frakg = \frakz(\frakg) \oplus \frakg_{ss},
	\qquad
	\frakz(\frakg) = \fraku(1),
	\qquad
	\frakg_{ss} = \frsu(2n)	=	
			\frakb'_{n}
			\oplus
			\bigoplus_{j=1}^{n} \frakd_j   	
			\oplus  	
			\bigoplus_{j=1}^{n-1} \frakf_j,
\]
where the Joyce decomposition of $\frsu(2n)$ is described in Example~\ref{Example} (note $\frakf_{n} = 0$).
Let $\{ Y^1, \dots, Y^{n-1} \}$ be a $(-\calB)$-orthonormal basis of $\frakb'_n$, and let $Y^{n}$ be a generator of $\fraku(1)$.
Consider the hypercomplex structure associated to the following basis of $\frakb_n = \fraku(1)\oplus\frakb_n'$
\[
	X_1^j = \frac{2}{\abs{\alpha_j}} Y^j
	\quad
	\text{ for $j=1,\dots,n-1$,}
	\quad\quad
	X_1^{n} = \frac{2}{\abs{\alpha_j}} (Y^1 + \dots + Y^{n-1}).
\]
Then the HKT metric $h$ in~\eqref{E:h} is not bi-invariant, as $\frakz(\frakg)$ and $\frakg_{ss}$ are not $h$-orthogonal. 
However it is strong, a fact that can be shown with a direct computation noting that $h|_{\frakg_{ss} \times \frakg_{ss}} = - \calB$.
\end{remark}

\begin{remark}
A consequence of Proposition~\ref{P:SHKT} is that, when the Joyce decomposition associated to $G/L$ features more than one level, an invariant HKT-Einstein metric is very often \emph{not strong}.
Consider for instance a homogeneous space of the form $M = \U(1)^{\ell}~\times~(K/L)$, where $K$ is a compact, connected, simple Lie group. 
The unique (up to scaling) HKT-Einstein metric corresponds to the coefficients
\[
	g_j = \alpha_j(H_{\hat{\delta}}) = 
	\frac{(\alpha_j, \alpha_j)}{4} \left( 2 + \dim_{\CC}\frakf_j \right),
	\quad j = 1, \dots, m,
\]
by Remark~\ref{R:comparison}, where $m$ is the number of layers.
One can easily compute these coefficients, verifying that they are not all equal, hence an invariant HKT-Einstein metric on $M$ can not be strong, unless $m=1$.
\end{remark}


\end{document}